\documentclass[10pt]{amsart}
\usepackage{amsmath,amssymb}
\usepackage{amsxtra, amsmath}
\usepackage{amssymb, amscd}
\usepackage{graphicx}
\usepackage{url}

\setlength{\hoffset}{-1in} \setlength{\voffset}{-1in}
\setlength{\oddsidemargin}{1in} \setlength{\evensidemargin}{1in}
\setlength{\textwidth}{6.5in} \setlength{\textheight}{8in}
\setlength{\topmargin}{1in} \setlength{\baselineskip}{14pt}

\newcommand\CC{\mathbb C}

\newcommand\RR{\mathbb R}

\newcommand\NN{\mathbb N}

\theoremstyle{plain}
\newtheorem{thm}{Theorem}[section]

\newtheorem{prop}[thm]{Proposition}
\newtheorem{cor}[thm]{Corollary}
\newtheorem{conj}[thm]{Conjecture}
\theoremstyle{definition}

\theoremstyle{remark}
\newtheorem{remark}[thm]{Remark}

\title{Matrix Gegenbauer
Polynomials: the $2\times 2$ Fundamental Cases }
\author{In\'es Pacharoni}
\author{Ignacio Zurri\'an}
\address{CIEM-FaMAF, Universidad Nacional de C\'ordoba, 5000 C\'ordoba, Argentina.}
\email{ pacharon@famaf.unc.edu.ar, zurrian@famaf.unc.edu.ar}

\thanks{This paper was partially supported by CONICET, PIP 112-200801-01533 and SeCyT-UNC}
\keywords{ Matrix Orthogonal Polynomials - Operators Algebra - Matrix Differential Operators}
\subjclass[2010]{22E45 - 33C45 - 33C47}

\begin{document}
\begin{abstract}
In this paper, we exhibit explicitly a sequence  of $2\times2$ matrix valued orthogonal polynomials with respect to a weight $W_{p,n}$,
for any pair of real numbers $p$ and $n$ such that $0<p<n$.
 The entries of these polynomiales are expressed in
terms of the Gegenbauer polynomials $C_k^\lambda$. Also the corresponding three-term recursion relations are given and we make some studies of the
algebra  of differential operators associated with the weight $W_{p,n}$.
\end{abstract}

\maketitle

\section{Introduction}
 The theory of matrix valued orthogonal polynomials, without any
 consideration of differential equations, goes back to \cite{K49} and
 \cite{K71}. In \cite{D97}, the study of the matrix valued orthogonal
 polynomials that are eigenfunctions of certain second order symmetric
 differential operators was started. The first explicit examples of such polynomials were given in \cite{GPT01}, \cite{GPT02a}, \cite{G03},  \cite{GPT03} and
 \cite{DG04}. See also \cite{DG05a}, \cite{DG05b},  \cite{CG05}, \cite{CG06},  and the references given there.

On the two dimensional sphere $S^2=\mathrm{SO}(3)/\mathrm{SO}(2)$,
the harmonic analysis with respect to the action of the orthogonal group is contained in the classical theory of the
spherical harmonics. In spherical coordinates, the zonal spherical functions on $S^2$ are the Legendre polynomials.
More generally, in the case of the $n$-dimensional sphere $S^n$
the zonal spherical functions are given  in terms of Gegenbauer (or ultraspherical) polynomials of parameter $(n-1)/2$.

This fruitful connection between orthogonal polynomials and representation theory of compact Lie groups is also established in the matrix case:
the matrix valued spherical functions of any $K$-type are closely related to matrix valued orthogonal polynomials.
In this way, several examples of matrix orthogonal polynomials which are eigenfunctions of a symmetric differential operator have been obtained by focusing on a group representation  approach.
   See for example \cite{GPT02a},  \cite{GPT05}, \cite{PT04}, \cite{PT07b}, \cite{PR08} and more recently    \cite{KPR12} and \cite{PTZ13}.

 The examples of matrix orthogonal polynomials introduced in this paper are  motivated by  the spherical functions of fundamental $K$-types associated with the  $n$-dimensional spheres $S^n\simeq G/K$, where $(G,K)=(\mathrm{SO}(n+1), \mathrm{SO}(n))$. These matrix valued spherical functions were studied in detail in \cite{TZ13B} and \cite{Z13}. The ``group  parameters" of  the fundamental $K$-types are  $p$, $n\in \NN$  such that $0<p<[n/2]$ and they  give rise to $2\times 2$ matrix valued orthogonal polynomials.

 In this paper we go beyond these group parameters and we extend these parameters continuously.
 We would like to remark that the group representation theory is a natural source of examples of matrix valued orthogonal polynomials.
We keep this in mind in spite of the fact that the results obtained in this paper
are self-contained, the proofs are of analytic nature and   they do not depend on any previous results on spherical functions.

Given a weight matrix $W$, it is very natural to study the algebra  $\mathcal D(W)$, of all differential
operators that have a sequence of matrix valued orthogonal polynomials with respect to  $W$ as eigenfunctions, see \eqref{algDW}.
 In the classical cases of Hermite, Laguerre and Jacobi weights,
 the structure of this algebra is well understood: it is a polynomial algebra in a second order differential operator, see \cite{M05}. In particular, it is a commutative algebra.
In the matrix  case, the first attempt to go beyond the issue of the existence of one nontrivial element in $\mathcal D(W)$  and to  study the full algebra is undertaken in \cite{CG06}, with the assistance of  symbolic computation, for a few  weights $W$.
The   first deep study of the  algebra  
$\mathcal D(W)$ can be founded in \cite{T11}, where the author worked out one of the examples introduced in \cite{CG06}.
We refer the reader to \cite{GT07} for basic definitions and main results concerning the algebra $\mathcal D(W)$. The present paper leads to understand completely a second and more promising example of $\mathcal D(W)$  in a forthcoming paper, \cite{Z15}. There are very few examples of non-commutative algebras that arise in a natural setup at the intersection of  harmonic analysis and
algebras. The study of such examples for the algebra $\mathcal D(W)$ considered here is one step in that direction.
++As a consequence of this work, together with F.A. Gr\"unbaum, in \cite{GPZ14} we extend to a matrix setup a result that traces its origin
and its importance to the work of Claude Shannon in lying the mathematical
 foundations of information theory,
and to a remarkable series of papers by D. Slepian, H. Landau and H. Pollak.

To the best of our knowledge,  this is the first example showing in a non-commutative setup that a bispectral property implies that the corresponding global operator of ``time and band limiting" admits a commuting local operator. This is a noncommutative analog of the famous prolate spheroidal wave operator.

\smallskip
Now we discuss briefly the content of the paper.
In Section \ref{sec-MOP} we recall the general notions of matrix valued orthogonal polynomials
and some results from \cite{GT07} about the
algebra $\mathcal D(W)$.

In Section \ref{sec-MOPn},  we introduce our sequence $\{P_w\}_{w\in\NN_0}$ of $2\times 2$ matrix valued
polynomials on $[-1,1] $ whose entries are given in terms of the classical Gegenbauer polynomials,
for  real parameters  $p$ and $n$ such that $0< p< n$, see \eqref{Pwdef}.
 We prove that these polynomials  satisfy $P_wD=\Lambda_wP_w$, where
$D$ is a (right-hand side) hypergeometric differential operator
and the eigenvalue is a diagonal matrix. This differential operator $D$ is symmetric with respect to the matrix weight
$W$ introduced in \eqref{peso-x}.
We use these facts to prove that the polynomials $\{P_w\}_{{w\in\NN_0}}$ are orthogonal with respect to the weight matrix $W=W_{p,n}$
 (Theorem \ref{PwortogonalW}).

We  also connect our  weight matrix $W_{p,n}$ with the weight  considered in   \cite{KRR14}, where the authors give examples of matrix valued Gegenbauer polynomials, extending for an arbitrary parameter $\nu$  the  results in \cite{KPR12} for $\nu=1$. See Remark \ref{comparacionK}.

In Section \ref{sec-ttrr} we prove a three-term recursion relation sa\-tis\-fied by
$\{P_w\}_{w\in\NN_0}$.
Section \ref{sec-dw} is focused on the study of the  algebra $\mathcal D(W)$.
In our  case  $\mathcal D(W)$ is a noncommutative algebra.
We provide a basis $\{D_1,D_2,D_3,D_4, I\}$ of the subspace  of the differential operators in $\mathcal D(W)$ of order at most two.
The differential operators $D_1$ and $D_2$ are symmetric operators, while $D_3$ and $D_4$ are not.
We conjecture that $D_1, D_2, D_3, D_4$ generates the  algebra $\mathcal D(W)$.

\section{Background on matrix valued orthogonal polynomials}\label{sec-MOP}

 Let $W=W(x)$ be a weight matrix of size $N$ on the real line, that is a complex $N\times N$ matrix valued integrable function on the interval $(a,b)$ such that $W(x)$ is positive definite almost everywhere and with finite moments of all orders. Let $\operatorname{Mat}_N(\CC)$ be the algebra of all $N\times N$ complex matrices and let $\operatorname{Mat}_N(\CC)[x]$ be the algebra over $\CC$ of all polynomials in the indeterminate $x$ with coefficients in $\operatorname{Mat}_N(\CC)$. We consider the following Hermitian sesquilinear form in the linear space $\operatorname{Mat}_N(\CC)[x]$
\begin{equation*}
  \langle P,Q \rangle =  \langle P,Q \rangle_W = \int_a^b P(x) W(x) Q(x)^*\,dx.
\end{equation*}
The following properties are satisfied, for all $P,Q,R\in \operatorname{Mat}_N(\CC)[x]$, $a,b\in \CC$, $T\in \operatorname{Mat}_N(\CC)$
\begin{enumerate}
  \item $\langle aP+bQ,R\rangle=a\langle P,R\rangle+ b\langle Q,R\rangle$,
  \item $\langle TP,R\rangle= T\langle P,R\rangle$,
  \item $\langle P,Q\rangle^*=\langle Q,P\rangle$,
  \item $\langle P,P\rangle\geq 0$. Moreover, if $\langle P,P\rangle=0$, then $P=0$.
\end{enumerate}

Let us denote $\NN_0=\NN\cup\{0\}$. Given a weight matrix $W$ one can construct sequences of matrix valued orthogonal polynomials, that is sequences $\{P_n\}_{n\in\NN_0}$, where $P_n$ is a polynomial of degree $n$ with nonsingular leading coefficient and $\langle P_n,P_m\rangle=0$ for $n\neq m$.
We observe that there exists a unique sequence of monic orthogonal polynomials $\{Q_n\}_{n\in\NN_0}$ in $\operatorname{Mat}_N(\CC)[x]$.
By following a standard argument, given for instance in \cite{K49} or \cite{K71}, one shows that the monic orthogonal polynomials $\{Q_n\}_{n\in\NN_0}$ satisfy a three-term recursion relation
$$x Q_n(x)=A_{n}Q_{n-1}(x)+ B_nQ_n(x)+ Q_{n+1}(x), \qquad n\in\NN_0,$$
where $Q_{-1}=0$ and $A_n, B_n$ are matrices depending on $n$ and not on $x$.

Two weights $W$ and $\widetilde W$ are said to be {\em similar} if there exists a nonsingular matrix $M$, which does not depend on $x$, such that
$$\widetilde W(x)=M W(x)M^*, \quad \text{ for all } x\in (a,b).$$
Notice that if $\{P_n\}_{n\geq 0}$ is a sequence of orthogonal polynomials with respect to $W$, and $M\in \mathrm{GL}_N(\CC)$, then $\{P_n M^{-1}\}_{n\geq 0}$ is orthogonal with respect to $\widetilde W=MWM^*$.
A weight matrix $W$ reduces  to a smaller size if there exists a nonsingular matrix $M$ such that
\begin{equation*}
 M W(x)  M^*= \begin{pmatrix}
    W_1(x) & 0\\ 0& W_2(x)
  \end{pmatrix}, \qquad \text{ for all } x\in (a,b),
\end{equation*}
where $W_1$ and $W_2$ are weights of smaller size.

For a given weight matrix and a sequence of  orthogonal polynomials, it may be of interest the study of the differential operators having these polynomials as eigenfunctions.
Let $D$ be a right-hand side ordinary differential operator with matrix polynomial coefficients $F_i(x)$ of degree less than or equal to $i$ of the form
\begin{equation}\label{D2}
  D=\sum_{i=0}^s \partial ^i F_i(x),\qquad \partial=\frac{d}{dx},
\end{equation}
with the action of $D$ on a polynomial function $P(x)$ given by
$$(PD)(x)=\sum_{i=0}^s \partial ^i (P)(x)F_i(x).$$
We say that  the differential operator $D$ is {\em symmetric} if $\langle PD,Q\rangle=\langle P,QD\rangle$, for all $P,Q\in \operatorname{Mat}_N(\CC)[x]$.
It is a matter of careful integration by parts to see that the condition of symmetry for a differential operator of order two  is equivalent to a set of  three differential equations involving the weight $W$ and the coefficients of the differential operator $D$.

\begin{prop}[\cite{GPT03} or \cite{DG04}] \label{equivDsymm}
Let $W(x)$ be a smooth weight matrix supported on $(a,b)$. Let $D=\partial ^2F_2(x)+\partial F_1(x)+F_0$.
Then $D$ is symmetric with respect to $W$ if and only if
\begin{equation*}\left\{\begin{aligned}
   F_2 W &=WF_2^*\\
    2(F_2W)'-F_1W &=WF_1^*\\
    (F_2W)''-(F_1W)'+F_0W&=WF_0^*
\end{aligned} \right. \end{equation*}
with the boundary conditions
$$\lim_{x\to a,b} F_2(x)W(x)=0, \quad \lim_{x\to a,b} \big (F_1(x)W(x)-WF_1^*(x)\big)=0.$$
\end{prop}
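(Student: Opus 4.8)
The plan is to carry out the ``careful integration by parts'' the text alludes to. Start from the definition
\[
\langle PD,Q\rangle=\int_a^b\big(P''F_2+P'F_1+PF_0\big)WQ^*\,dx=\int_a^b P''(F_2WQ^*)\,dx+\int_a^b P'(F_1WQ^*)\,dx+\int_a^b P(F_0WQ^*)\,dx,
\]
and integrate by parts twice in the first integral and once in the second, so as to remove all derivatives from $P$. This produces
\[
\langle PD,Q\rangle=\mathrm{BT}(P,Q)+\int_a^b P\big[(F_2WQ^*)''-(F_1WQ^*)'+F_0WQ^*\big]\,dx,
\]
where the boundary term is $\mathrm{BT}(P,Q)=\big[P'(F_2W)Q^*+P\big(F_1W-(F_2W)'\big)Q^*-P(F_2W)(Q^*)'\big]_a^b$. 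Expanding the interior integrand with the Leibniz rule and regrouping according to which derivative of $Q^*$ it multiplies, the integrand becomes $P\big[(F_2W)(Q^*)''+(2(F_2W)'-F_1W)(Q^*)'+((F_2W)''-(F_1W)'+F_0W)Q^*\big]$.

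Next, compute the other side. Since transposition--conjugation commutes with $d/dx$ one has $(QD)^*=F_2^*(Q^*)''+F_1^*(Q^*)'+F_0^*Q^*$, whence $\langle P,QD\rangle=\int_a^b P\big[WF_2^*(Q^*)''+WF_1^*(Q^*)'+WF_0^*Q^*\big]\,dx$. Subtracting the two expressions,
\[
\langle PD,Q\rangle-\langle P,QD\rangle=\mathrm{BT}(P,Q)+\int_a^b P\big[\mathcal A(Q^*)''+\mathcal B(Q^*)'+\mathcal C Q^*\big]\,dx,
\]
with $\mathcal A=F_2W-WF_2^*$, $\mathcal B=2(F_2W)'-F_1W-WF_1^*$ and $\mathcal C=(F_2W)''-(F_1W)'+F_0W-WF_0^*$ --- precisely the left-minus-right sides of the three displayed equations.

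For the implication ``three equations and boundary conditions $\Rightarrow$ $D$ symmetric'': the equations make $\mathcal A=\mathcal B=\mathcal C\equiv0$, so the integral drops out; in $\mathrm{BT}$ the hypothesis $\lim_{x\to a,b}F_2W=0$ annihilates the first and third summands, while rewriting the second equation as $(F_2W)'=\tfrac12(F_1W+WF_1^*)$ turns the middle summand into $\big[\tfrac12 P(F_1W-WF_1^*)Q^*\big]_a^b$, which vanishes by $\lim_{x\to a,b}(F_1W-WF_1^*)=0$. Hence $\langle PD,Q\rangle=\langle P,QD\rangle$. For the converse one reads the identity as an obstruction holding for all $P,Q$: using the arbitrariness of the degrees and leading coefficients of $P$ and $Q$ (a standard argument: a polynomial and its first two derivatives can be prescribed with enough freedom, and $\int_a^b x^k M(x)\,dx=0$ for all $k$ forces $M\equiv0$), one first extracts $\mathcal A\equiv0$ from the top-order term, then --- now that the $(Q^*)''$ term is gone --- integrates the $(Q^*)'$ term by parts once more and extracts $\mathcal B\equiv0$, then $\mathcal C\equiv0$; finally, with the interior integral eliminated, independently varying the endpoint values of $P,P',Q,(Q^*)'$ in $\mathrm{BT}$ forces $\lim F_2W=0$ and, again via the $\mathcal B$-equation, $\lim(F_1W-WF_1^*)=0$.

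The integration-by-parts bookkeeping and the ``$\Rightarrow$'' direction are routine; I expect the genuine obstacle to be the converse, i.e.\ justifying that the arbitrariness of $P$ and $Q$ really decouples the three derivative orders from one another and from the endpoint contributions, and doing so carefully when $F_iW$ is merely integrable --- not bounded --- near $a$ and $b$ (which is exactly why the boundary conditions are stated as limits rather than as values).
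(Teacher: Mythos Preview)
The paper does not actually prove this proposition: it is quoted, with attribution to \cite{GPT03} and \cite{DG04}, immediately after the remark that ``it is a matter of a careful integration by parts'' to obtain it. So there is no in-paper proof to compare against; your sketch \emph{is} the argument that the cited references carry out.

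On its own merits your computation is correct. The two integrations by parts, the identification of $\mathcal A,\mathcal B,\mathcal C$ with the three displayed differences, and the handling of the boundary term in the ``$\Rightarrow$'' direction are all right (in particular the rewriting $(F_2W)'=\tfrac12(F_1W+WF_1^*)$ to reduce the surviving boundary term to $\tfrac12[P(F_1W-WF_1^*)Q^*]_a^b$ is the clean way to do it). Your self-assessment of the converse is also accurate: the only place requiring care is the decoupling argument, and your outline --- peel off $\mathcal A$ from the highest-order term, then $\mathcal B$, then $\mathcal C$, and only afterwards read off the two boundary limits from the residual $\mathrm{BT}(P,Q)$ --- is the standard one and works because matrix polynomials can be chosen with independently prescribed values and derivatives at the two endpoints while the interior integral has already been shown to vanish identically. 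One small caveat worth making explicit when you write it up: the assertion ``$\int_a^b x^kM(x)\,dx=0$ for all $k$ forces $M\equiv0$'' uses that $M$ is continuous (or at least determined by its moments), which holds here for $\mathcal A=F_2W-WF_2^*$ etc.\ under the standing assumptions on $W$, but should be stated rather than left implicit, especially in light of your own remark about $F_iW$ being possibly unbounded near the endpoints.
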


\

We consider the following subalgebra of the algebra of all right-hand side differential operators with coefficients in $\operatorname{Mat}_N(\CC)[x]$,
\begin{equation*}
  \mathcal D= \{D=\textstyle\sum_{i=0}^s \partial ^i F_i\, : \,s\in\NN_0,\, F_i\in \operatorname{Mat}_N(\CC)[x], \deg F_i\leq i\}.
\end{equation*}

\begin{prop}[\cite{GT07}, Propositions 2.6 and 2.7]\label{eigenvalue-prop}
  Let $W=W(x)$ be a weight matrix of size $N\times  N$ and let $\{Q_n\}_{n\geq 0}$ be  the sequence of monic orthogonal polynomials in $\operatorname{Mat}_N(\CC)[x]$. If $D$ is a right-hand side ordinary differential operator of order $s$, as in \eqref{D2}, such that
  $$Q_nD=\Lambda_n Q_n, \qquad \text{for all } n\in\NN_0,$$
  with $\Lambda_n\in \operatorname{Mat}_N(\CC)$, then
 $F_i=F_i(x)=\sum_{j=0}^i x^j F_j^i$, $F_j^i \in \operatorname{Mat}_N(\CC)$, is a polynomial and $\deg(F_i)\leq i$. Moreover $D$ is determined by the sequence $\{\Lambda_n\}_{n\geq 0}$ and
 \begin{equation}\label{eigenvaluemonicos}
   \Lambda_n=\sum_{i=0}^s [n]_i F_i^i, \qquad \text{for all } n\geq 0,
 \end{equation}
    where $[n]_i=n(n-1)\cdots (n-i+1)$, $[n]_0=1$.
\end{prop}

Given a matrix weight $W$, the algebra
\begin{equation}\label{algDW}
  \mathcal D(W)=\{D\in \mathcal D\, : \, P_nD=\Lambda_n(D) P_n, \, \Lambda_n(D)\in \operatorname{Mat}_N(\CC), \text{ for all }n\in\NN_0\}
\end{equation}
is introduced in \cite{GT07}, where $\{P_n\}_{n\in \NN_0}$ is any sequence of matrix valued orthogonal polynomials  with respect to $W$.

We observe that the definition of $\mathcal D(W)$ depends only on the weight matrix $W$ and not  on the particular sequence of orthogonal polynomials,
since two sequences $\{P_w\}_{w\in\NN_0}$ and $\{Q_w\}_{w\in\NN_0}$ of matrix orthogonal polynomials with respect to the weight $W$ are related by
$P_w=M_wQ_w$, for  ${w\in\NN_0}$, with $\{M_w\}_{w\in\NN_0}$ invertible matrices (see \cite[Corollary 2.5]{GT07}).

\begin{prop} [\cite{GT07}, Proposition 2.8]\label{prop2.8-GT}
For each $n\in\NN_0$, the mapping $D\mapsto \Lambda_n(D) $ is a representation of $\mathcal D(W)$ in $\operatorname{Mat}_N(\CC)$. Moreover, the sequence of representations $\{\Lambda_n\}_{n\in\NN_0}$ separates the elements of $\mathcal D(W)$ .
\end{prop}

We remark that the result in Proposition \ref{prop2.8-GT} says that the map $$D\mapsto ( \Lambda_0(D), \Lambda_1(D), \Lambda_2(D),\dots\dots)$$ is an injective morphism of $\mathcal D(W)$ into $\operatorname{Mat}_N(\CC)^{\NN_0}$, the direct product of infinite copies, indexed by $\NN_0$, of the algebra $\operatorname{Mat}_N(\CC)$.
In particular, if $D_1,D_2\in \mathcal D(W)$ then
\begin{equation*}
  D_1=D_2 \quad  \text{ if and only if } \quad  \Lambda_n(D_1)=\Lambda_n(D_2) \, \text{ for all }n\in\NN_0.
\end{equation*}

For any $D\in \mathcal D(W)$ there exists a unique differential operator $D^*\in \mathcal D(W)$, the adjoint of $D$ in $\mathcal D(W)$, such that
$$\langle PD,Q\rangle=\langle P,QD^*\rangle,$$ for all $P,Q\in \operatorname{Mat}_N(\CC)[x]$.
See Theorem 4.3 and Corollary 4.5 in \cite{GT07}.
The map $D\mapsto D^*$ is a *-operation in the algebra $\mathcal D(W)$. Moreover, it is shown that  $\mathcal S(W)$, the set of all symmetric operators in $\mathcal D(W)$, is a real form of the space $\mathcal D(W)$, i.e.
$$\mathcal D(W)= \mathcal S (W)\oplus i \mathcal S (W),$$
as real vector spaces. In particular, the algebra $\mathcal D(W)$, together with the involution, is completely determined by $\mathcal S(W)$.

\begin{cor}
A differential operator $D\in \mathcal D(W)$ is a symmetric operator if and only if   $$\Lambda_n(D)\langle Q_n, Q_n\rangle= \langle Q_n, Q_n\rangle \Lambda_n(D)^* $$ for all ${n\in\NN_0}$.
\end{cor}

Also it is worth to recall the following important result from \cite{GT07}.

\begin{prop}[Proposition 2.10]\label{symoperinD(W)}
  If $D\in \mathcal D$ is symmetric then $D\in \mathcal D(W)$.
\end{prop}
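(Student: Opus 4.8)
The plan is to work with the monic orthogonal polynomials $\{Q_n\}_{n\ge0}$, which is legitimate because, as noted after the definition of $\mathcal D(W)$, membership in $\mathcal D(W)$ does not depend on the chosen sequence of orthogonal polynomials. So it suffices to show that a symmetric $D\in\mathcal D$ has every $Q_n$ as an eigenfunction.

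First I would record the elementary but essential observation that every $D=\sum_{i=0}^s\partial^iF_i\in\mathcal D$ preserves the filtration of $\operatorname{Mat}_N(\CC)[x]$ by degree: if $\deg P\le n$, then $PD=\sum_{i=0}^s\partial^i(P)F_i$ has degree at most $n$, since $\partial^i(P)$ has degree $\le n-i$ and $\deg F_i\le i$ forces each term to have degree $\le(n-i)+i=n$. In particular $Q_nD$ is a polynomial of degree $\le n$. Since $Q_0,\dots,Q_n$ are monic of degrees $0,\dots,n$, they form a basis of the left $\operatorname{Mat}_N(\CC)$-module of matrix polynomials of degree $\le n$, so there are unique matrices $c_{n,k}\in\operatorname{Mat}_N(\CC)$ with
$$Q_nD=\sum_{k=0}^n c_{n,k}\,Q_k.$$

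The main step is then to use symmetry to kill the coefficients $c_{n,k}$ with $k<n$. Fix such a $k$. On the one hand, using property (2) of the inner product together with orthogonality of the $Q_j$,
$$\langle Q_nD,Q_k\rangle=\sum_{j=0}^n c_{n,j}\langle Q_j,Q_k\rangle=c_{n,k}\langle Q_k,Q_k\rangle.$$
On the other hand, by symmetry $\langle Q_nD,Q_k\rangle=\langle Q_n,Q_kD\rangle$, and since $Q_kD$ has degree $\le k<n$ it is a left-combination of $Q_0,\dots,Q_{n-1}$, whence $\langle Q_n,Q_kD\rangle=0$. Thus $c_{n,k}\langle Q_k,Q_k\rangle=0$; but $\langle Q_k,Q_k\rangle$ is positive definite, hence invertible, so $c_{n,k}=0$. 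Therefore $Q_nD=c_{n,n}Q_n$ for every $n\ge0$, i.e. each $Q_n$ is an eigenfunction of $D$ with matrix eigenvalue $c_{n,n}$, which by definition means $D\in\mathcal D(W)$.

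I do not expect a genuine obstacle here: there is no heavy computation. The only points requiring care are the degree-filtration remark — which is precisely where the hypothesis $\deg F_i\le i$ is used — and keeping the matrix coefficients on the correct (left) side when expanding in the basis $\{Q_k\}$ and applying property (2) of $\langle\cdot,\cdot\rangle$; the positive-definiteness of $\langle Q_k,Q_k\rangle$ then finishes the argument. Accordingly the proof should be quite short.
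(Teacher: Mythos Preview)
Your argument is correct: the degree-filtration observation, the expansion $Q_nD=\sum_{k\le n}c_{n,k}Q_k$, and the symmetry computation $\langle Q_nD,Q_k\rangle=\langle Q_n,Q_kD\rangle=0$ for $k<n$ together give $c_{n,k}\langle Q_k,Q_k\rangle=0$, and invertibility of $\langle Q_k,Q_k\rangle$ finishes it. Note, however, that the present paper does not supply its own proof of this proposition: it is quoted verbatim from \cite{GT07} (their Proposition~2.10), so there is no in-paper argument to compare against. Your proof is precisely the standard one behind that citation, so there is no substantive difference in approach.
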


\section{Matrix valued orthogonal polynomials associated with the $n$-dimensional spheres}\label{sec-MOPn}

Motivated by the results obtained in \cite{TZ13B} we introduce the following family of polynomials, for $w\in\NN_0$,
\begin{equation} \label{Pwdef}
P_w(x)=P_w^{n,p}(x)=\begin{pmatrix}
\frac{1}{n+1}\, C_w^{\frac{n+1}{2}}(x)+\frac{1}{p+w}\,C_{w-2}^{\frac{n+3}{2}}(x)&\frac{1}{p+w}\,C_{w-1}^{\frac{n+3}{2}}(x)\\ \mbox{} \\
\frac{1}{n-p+w}\,C_{w-1}^{\frac{n+3}{2}}(x)&\frac{1}{n+1}\, C_w^{\frac{n+1}{2}}(x)+\frac{1}{n-p+w}\,C_{w-2}^{\frac{n+3}{2}}(x)
\end{pmatrix},
\end{equation}
with parameters $p,n\in \mathbb R$ such that $0< p< n$. Here   $C_n^\lambda(x)$ denotes the $n$-th Gegenbauer polynomial
$$C_w^\lambda(x)=\frac{(2\lambda)_w} {w!}\,
 {}_2\!F_1\left(\begin{matrix}-w,\,w+2\lambda\\ \lambda+1/2\end{matrix};\frac{1-x}{2}\right),  \qquad x\in[-1,1],
$$
 where $(a)_w=a(a+1)\dots (a+w-1)$ denotes the Pochhammer symbol. As usual, we assume $C_w^\lambda(x)=0$ if $w<0$.
We recall that $C_w^\lambda$ is a polynomial of degree $w$, with leading coefficient  $\frac{2^w(\lambda)_w}{w!}$.

\smallskip
Let us observe that  $\deg(P_w)=w$ and the leading coefficient of $P_w$ is a nonsingular scalar matrix
  \begin{equation}\label{leadcoef}
    \frac {2^w (\tfrac{n+1}2)_w}{(n+1)\,w!}\,\text{Id}=\frac 1{w!}{2^{w-1} (\tfrac{n+3}2)_{w-1}}\,\text{Id}.
  \end{equation}

\

We start by proving that the polynomials $P_w$ given in \eqref{Pwdef} are eigenfunctions of the following  differential operator $D$.

\begin{thm} \label{operatorD} For each $w\in \NN_0$, the matrix  polynomial $P_w$ is an eigenfunction of the differential operator
$$D= \partial^2 \,(1-x^2)-\partial \,\Big( (n+2)x+2\left(\begin{smallmatrix}
  0&1\\1&0\end{smallmatrix}\right) \Big)-\left(\begin{smallmatrix}
    p&0\\0&n-p  \end{smallmatrix}\right), $$
with eigenvalue
$$\Lambda_w(D)= \begin{pmatrix}
  -w(w+n+1)-p & 0\\ 0& -w(w+n+1)-n+p
\end{pmatrix}.$$
\end{thm}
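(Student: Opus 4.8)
The plan is to reduce the matrix eigenvalue equation $P_w D = \Lambda_w(D)\, P_w$ to a collection of scalar identities for Gegenbauer polynomials, which can then be verified using the classical second-order differential equation and the standard contiguous relations for the $C_k^\lambda$. First I would recall that each Gegenbauer polynomial satisfies
\begin{equation*}
(1-x^2)\,(C_k^\lambda)''(x) - (2\lambda+1)\,x\,(C_k^\lambda)'(x) + k(k+2\lambda)\,C_k^\lambda(x) = 0,
\end{equation*}
so that if we write $D_\lambda = \partial^2(1-x^2) - \partial\,(2\lambda+1)x$ acting on the right, then $C_k^\lambda D_\lambda = -k(k+2\lambda)\,C_k^\lambda$. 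In the operator $D$ of the theorem the first-order coefficient is $(n+2)x + 2\left(\begin{smallmatrix} 0&1\\1&0\end{smallmatrix}\right)$; note that $n+2 = 2\lambda+1$ with $\lambda = \tfrac{n+1}{2}$ for the diagonal $C_w^{(n+1)/2}$ entries, while $n+2 = (2\mu+1) - 1$ with $\mu = \tfrac{n+3}{2}$ for the off-diagonal and subleading $C^{(n+3)/2}$ entries. Thus applying $D$ to a single Gegenbauer entry produces, besides the diagonal eigenvalue-type term, two "error" contributions: one from the mismatch of the parameter $n+2$ versus $2\mu+1$ (producing a term proportional to $(C_k^\mu)'$), and one from the off-diagonal $2\left(\begin{smallmatrix} 0&1\\1&0\end{smallmatrix}\right)$ (also producing a $\pm 2(C^\mu)'$ in the opposite matrix position), plus the zeroth-order $\left(\begin{smallmatrix} p&0\\0&n-p\end{smallmatrix}\right)$ term.

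The key step is then to compute $P_w D$ entry by entry and check, in each of the four matrix positions, that all the derivative terms $(C_k^\lambda)'$ and $(C_k^\mu)'$ cancel against each other after using the contiguous/derivative identities
\begin{equation*}
\frac{d}{dx} C_k^\lambda(x) = 2\lambda\, C_{k-1}^{\lambda+1}(x), \qquad
C_k^\lambda(x) = C_k^{\lambda+1}(x) - C_{k-2}^{\lambda+1}(x),
\end{equation*}
which let me re-express $(C_w^{(n+1)/2})' = (n+1)\,C_{w-1}^{(n+3)/2}$ and relate the surviving terms to the $C^{(n+3)/2}$ polynomials already appearing in $P_w$. The precise choice of the scalar prefactors $\tfrac{1}{n+1}$, $\tfrac{1}{p+w}$, $\tfrac{1}{n-p+w}$ in \eqref{Pwdef} is exactly what makes these cancellations work: the $\tfrac{1}{n+1}$ converts the derivative of $C_w^{(n+1)/2}$ into a clean $C_{w-1}^{(n+3)/2}$, and the denominators $p+w$, $n-p+w$ are forced so that the zeroth-order term $-\left(\begin{smallmatrix} p&0\\0&n-p\end{smallmatrix}\right)$ combines with the Gegenbauer eigenvalue $-k(k+2\mu)$ (with $k=w-1$ or $w-2$, $\mu = \tfrac{n+3}{2}$) to reproduce exactly $\Lambda_w(D)$ times the same entry. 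After all derivative terms cancel, what remains in each entry is a linear combination of $C_w^{(n+1)/2}$, $C_{w-1}^{(n+3)/2}$, $C_{w-2}^{(n+3)/2}$ with scalar coefficients, and one checks these coefficients match $\Lambda_w(D)\,P_w$ position by position; the diagonal and off-diagonal entries of $\Lambda_w(D)$ differ precisely by $(n-p)-p$, which is accounted for by the difference between the $p+w$ and $n-p+w$ denominators together with the eigenvalue $-(w-1)(w-1+n+2)$ and $-(w-2)(w-2+n+2)$ of the subleading Gegenbauer terms.

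The main obstacle I expect is bookkeeping: keeping track of which parameter ($\tfrac{n+1}{2}$ or $\tfrac{n+3}{2}$) and which index shift ($w$, $w-1$, $w-2$) appears where, and making sure the degenerate cases $w=0,1$ (where some $C_{w-j}^\lambda$ vanish by the convention $C_k^\lambda = 0$ for $k<0$) are consistent — these should be handled simply by noting that the claimed identity is polynomial in all the relevant quantities and the vanishing-for-negative-index convention is compatible with the contiguous relations. A cleaner alternative, which I would mention, is to verify the equation first for the scalar reducible case $p = n/2$ (where $P_w$ and $D$ decouple after conjugation by $\left(\begin{smallmatrix}1&1\\1&-1\end{smallmatrix}\right)$ into two ordinary Gegenbauer equations with parameters shifted by the $\pm 1$ from the off-diagonal $2\left(\begin{smallmatrix}0&1\\1&0\end{smallmatrix}\right)$ term), and then argue that both sides of $P_w D = \Lambda_w(D) P_w$ depend polynomially on $p$, so that agreement for the single value $p=n/2$ — or rather, carrying out the general-$p$ computation with the off-diagonal term no longer removable — forces the identity for all $p$. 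In practice the direct entrywise computation is short enough that I would simply carry it out, organizing it as: (1) state the Gegenbauer ODE and the two contiguous relations; (2) compute $C_w^{(n+1)/2} D$ and $C_{w-j}^{(n+3)/2} D$ ($j=1,2$) as $2\times 1$ or scalar expressions; (3) assemble $P_w D$ and collect terms; (4) read off that the result equals $\Lambda_w(D) P_w$.
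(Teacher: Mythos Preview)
Your approach is essentially the paper's own: verify $P_wD - \Lambda_w P_w = 0$ entry by entry using the Gegenbauer differential equation together with the derivative and contiguous relations, and (as the paper does explicitly) reduce the $(2,2)$ and $(2,1)$ entries to the $(1,1)$ and $(1,2)$ cases via the $p \leftrightarrow n-p$ symmetry.

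One correction you will need before the computation closes: the contiguous relation you quote as $C_k^\lambda = C_k^{\lambda+1} - C_{k-2}^{\lambda+1}$ is missing its scalar factor. The correct identity is
\[
(k+\lambda)\,C_k^\lambda(x) \;=\; \lambda\,\bigl(C_k^{\lambda+1}(x) - C_{k-2}^{\lambda+1}(x)\bigr),
\]
which is exactly the form the paper records and uses (their relation \eqref{new}). With the factor omitted, the coefficients you assemble in step~(3) will not match $\Lambda_w(D)\,P_w$; with it in place, the argument goes through as you describe. Your aside about specializing to $p=n/2$ is, as you yourself note, not an independent proof (a single value of $p$ cannot pin down a polynomial identity in $p$), so the direct entrywise computation is the actual content.
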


\begin{proof} We need to verify that $$P_w D=\Lambda_wP_w.$$
We will need to use the following properties of the Gegenbauer polynomials (for the first three see \cite{KS} page 40, and for the last one see \cite{S75}, page 83, equation (4.7.27))
\begin{align}
\label{a0}& (1-x^2)\frac{d^2 }{dx^2} C_m^\lambda(x)-(2\lambda+1)x  \frac {d }{dx}C_{m}^{\lambda}(x) + m(m+2\lambda)C_{m}^{\lambda}(x)=0,\\
\label{aa}&\frac{d }{dx}C_m^\lambda(x)=2\lambda \,C_{m-1}^{\lambda+1}(x),\\
\label{bb}&2(m+\lambda)x\, C_m^\lambda (x)= (m+1)C_{m+1}^{\lambda}(x)+(m+2\lambda-1)C_{m-1}^{\lambda}(x),\\
\label{dd}&\frac{(m+2\lambda-1)}{2(\lambda-1)} C_{m+1}^{\lambda-1}(x)=  C_{m+1}^{\lambda}(x) -  x\, C_{m}^{\lambda}(x).
\end{align}

\noindent Also, combining  \eqref{bb} and \eqref{dd}, we have
\begin{equation}
  \label{new}
(m+\lambda) C_{m+1}^{\lambda-1}(x)=   (\lambda-1)  \Big (C_{m+1}^\lambda(x)-C_{m-1}^\lambda(x)\Big).
\end{equation}

\smallskip
The entry $(1,1)$ of the matrix $P_w D-\Lambda_wP_w$ is
\begin{align*}
\nonumber (1&-x^2)(P_w)''_{11}-(n+2)x(P_w)'_{11}-2(P_w)'_{12}+w(w+n+1) (P_w)_{11} \\
 & =(1-x^2)\left( \tfrac 1{n+1} \,C_w^{\tfrac{n+1}2}+\tfrac{1}{p+w} C_{w-2}^{\tfrac{n+3}2}\right)''-(n+2)x\left( \tfrac 1{n+1} \,C_w^{\tfrac{n+1}2}+\tfrac{1}{p+w} C_{w-2}^{\tfrac{n+3}2}\right)' \\
 \nonumber & \quad -\tfrac 2{p+w} \left( C_{w-1}^{\tfrac{n+3}2}\right)'+w(w+n+1) \left( \tfrac 1{n+1} \,C_w^{\tfrac{n+1}2}+\tfrac{1}{p+w} C_{w-2}^{\tfrac{n+3}2}\right).
\end{align*}

Applying \eqref{a0} for $\lambda=\frac12(n+1)$, $\lambda=\frac12(n+3)$ and \eqref{aa} for $\lambda=\frac12(n+3)$, with $m=w$, we have that the entry (1,1) of  $P_w D-\Lambda_wP_w$, multiplied by
$(p+w)/2$  is
\begin{align*}
-(n+3)C_{w-2}^{\tfrac{n+5}2}+(n+3)\,x\, C_{w-3}^{\tfrac{n+5}2}+ (w+n+1) C_{w-2}^{\tfrac{n+3}2} =0,
\end{align*}
this last identity follows from equation \eqref{dd} with $\lambda=\frac{n+5}2$ and $m=w-3$.
Repeating the previous verification, by changing $p$ by $n-p$, it follows that the entry $(2,2)$ of $P_w D-\Lambda_wP_w $ is also zero.

The entry $(1,2)$ of $P_w D-\Lambda_wP_w$ is
$$(1-x^2) (P_w)''_{12}-(n+2)x(P_w)'_{12}-2(P_w)'_{11}+\big(w(w+n+1)-n+2p\big) (P_w)_{12},$$
if we multiply it by $(p+w)$  we get
\begin{equation}\label{21entryD}
\begin{split}
 (1-x^2)&\Big(C_{w-1}^{\tfrac{n+3}2} \Big)'' -(n+2) x\Big(C_{w-1}^{\tfrac{n+3}2}\Big)' +( w(w+n+1)-n+2p) C_{w-1}^{\tfrac{n+3}2}
 - 2\tfrac{(p+w)}{n+1} \Big(C_{w}^{\tfrac{n+1}2}\Big)'-2\Big(C_{w-2}^{\tfrac{n+3}2}\Big)'.
\end{split}
\end{equation}

Applying \eqref{a0} for $\lambda=(n+3)/2$, $m=w-1$,  \eqref{aa} for $\lambda=(n+1)/2$, $m=w$ and $\lambda=(n+3)/2$, $m=w-1$, one obtain that   \eqref{21entryD}  is
\begin{equation*}\label{21entryDb}
  2x \Big(C_{w-1}^{\tfrac{n+3}2}\Big)'-2(w-1) C_{w-1}^{\tfrac{n+3}2}-2(n+3)C_{w-3}^{\tfrac{n+5}2}.
\end{equation*}
Now, applying \eqref{aa} and \eqref{dd}, this expression becomes
  \begin{equation*}\label{21entryDc}
2(n+3)\Big(C_{w-1}^{\tfrac{n+5}2}-C_{w-3}^{\tfrac{n+5}2}\Big) -2(2w+n+1) C_{w-1}^{\tfrac{n+3}2},
\end{equation*}
which is equal to zero by \eqref{new} with $\lambda=\frac{n+5}2$ and $m=w-2$. This concludes that the entry $(1,2)$ of $P_w D-\Lambda_wP_w$ is zero.
To complete the proof of the theorem we need to verify that the entry $(2,1)$  is also zero.
This is obtained making exactly the same computations, by changing $p$ by $n-p$.
\end{proof}

\smallskip
We introduce the weight matrix
\begin{equation}\label{peso-x}
  W(x)= W_{p,n}=(1-x^2)^{\tfrac n2 -1} \begin{pmatrix}
  p\,x^2+n-p & -nx\\ -nx & (n-p)x^2+p
\end{pmatrix},\quad x\in [-1,1].
\end{equation}

\begin{prop}
For $n\neq 2p$, the weight $W(x)$ does not reduce to a smaller size.
 \end{prop}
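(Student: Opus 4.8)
The plan is to show that no constant invertible matrix $M$ can block-diagonalize $W$. I will use the standard reduction criterion: a weight $W$ reduces if and only if there exists a non-scalar Hermitian matrix $T$ commuting with $W(x)$ for all $x$ in the support; equivalently $W(x)^{-1}W(y)$ has, for generic $x,y$, a common eigenspace decomposition independent of $x,y$. Concretely, writing $W(x)=(1-x^2)^{n/2-1}\widetilde W(x)$ with
\begin{equation*}
\widetilde W(x)=\begin{pmatrix} p\,x^2+n-p & -nx\\ -nx & (n-p)x^2+p\end{pmatrix},
\end{equation*}
reducibility of $W$ is equivalent to the existence of a single Hermitian $T\notin\CC\,\mathrm{Id}$ with $T\widetilde W(x)=\widetilde W(x)T$ for all $x\in(-1,1)$.

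First I would expand $T\widetilde W(x)=\widetilde W(x)T$ as a polynomial identity in $x$ and collect coefficients of $1,x,x^2$. Since $\widetilde W(x)=\widetilde W_0+\widetilde W_1 x+\widetilde W_2 x^2$ with
\begin{equation*}
\widetilde W_0=\begin{pmatrix} n-p&0\\0&p\end{pmatrix},\quad
\widetilde W_1=\begin{pmatrix}0&-n\\-n&0\end{pmatrix},\quad
\widetilde W_2=\begin{pmatrix}p&0\\0&n-p\end{pmatrix},
\end{equation*}
the commutation $[T,\widetilde W(x)]=0$ for all $x$ is equivalent to the three conditions $[T,\widetilde W_0]=[T,\widetilde W_1]=[T,\widetilde W_2]=0$. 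Writing $T=\left(\begin{smallmatrix} a&b\\ \bar b&d\end{smallmatrix}\right)$ with $a,d\in\RR$, the condition $[T,\widetilde W_1]=0$ forces (off-diagonal entries of a symmetric product) $a=d$, i.e. $T=a\,\mathrm{Id}+\left(\begin{smallmatrix}0&b\\ \bar b&0\end{smallmatrix}\right)$; then $[T,\widetilde W_0]=0$ gives $b(n-p)=bp$ from comparing the $(1,2)$ entries, hence $b(n-2p)=0$. Since by hypothesis $n\neq 2p$, we get $b=0$, so $T=a\,\mathrm{Id}$ is scalar. This contradicts $T\notin\CC\,\mathrm{Id}$, so $W$ does not reduce.

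The only genuinely delicate point is to justify the equivalence "reduces $\iff$ a non-scalar Hermitian $T$ commutes with $W(x)$ for all $x$". One direction is immediate: if $MW(x)M^*$ is block diagonal then $T=M^*\big(\begin{smallmatrix}c_1 I_1&0\\0&c_2 I_2\end{smallmatrix}\big)(M^*)^{-1}$ with $c_1\neq c_2$ is Hermitian, non-scalar, and commutes with $W(x)$. For the converse, given such a $T$, spectral decomposition of $T$ yields orthogonal eigenprojections that commute with every $W(x)$; choosing $M$ to be (the inverse adjoint of) an orthonormal basis adapted to the eigenspaces of $T$ block-diagonalizes all $W(x)$ simultaneously, and positivity of $W$ guarantees the diagonal blocks are themselves weights. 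I would either cite this standard fact (it appears, e.g., in the references on reducibility quoted in Section~\ref{sec-MOP}) or include the short argument above. Everything else is the elementary linear algebra already sketched, so no step presents a real obstacle once the criterion is in hand; the main thing to be careful about is allowing complex off-diagonal entries $b$ in $T$ and checking the argument still closes, which it does because $n-2p$ is a nonzero real scalar.
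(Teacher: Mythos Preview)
Your linear-algebra computation is fine—the ordinary commutant $\{T:T\widetilde W(x)=\widetilde W(x)T\text{ for all }x\}$ is indeed $\CC\cdot I$ when $n\neq 2p$—but the reducibility criterion you invoke is not valid for the paper's definition. Here ``reduces'' means $MWM^*$ is block diagonal for some \emph{nonsingular} $M$ (a congruence, not a similarity), and this is \emph{not} equivalent to the existence of a non-scalar Hermitian $T$ with $TW(x)=W(x)T$. Your forward implication already breaks: the matrix $T=M^*E(M^*)^{-1}$ you propose is Hermitian and commutes with $W$ only when $MM^*$ commutes with $E$, which need not hold. Concretely, take $W(x)=\operatorname{diag}(1,x^2+1)$ on $[-1,1]$ and $M=\left(\begin{smallmatrix}1&1\\0&1\end{smallmatrix}\right)$; then $W'=MWM^*$ reduces (via $M^{-1}$), yet $[W'(x),W'(y)]\neq 0$ for $x^2\neq y^2$, so the commutant of $\{W'(x)\}$ is only $\CC\cdot I$.

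There are two easy repairs. The correct commutant criterion uses the relation $TW(x)=W(x)T^*$ rather than $TW(x)=W(x)T$; if you rerun your coefficient-matching with this condition on a general $T=\left(\begin{smallmatrix}a&b\\c&d\end{smallmatrix}\right)$ you obtain $a=d\in\RR$, $b,c\in\RR$, together with $bp=(n-p)c$ and $b(n-p)=pc$, which force $b=c=0$ when $n\neq 2p$, hence $T\in\RR\cdot I$ and $W$ is irreducible. Alternatively—and this is what the paper actually does—argue directly with $M$: expand the $(1,2)$ entry of $MW(x)M^*$ as a quadratic in $x$, set its three coefficients to zero, and deduce from the resulting system that $\det M=0$. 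That direct route is short and requires no external criterion.
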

\begin{proof}
  Assume that there exists a nonsingular matrix $M=\left(\begin{matrix}
    m_{11}&m_{12}\\ m_{21}&m_{22}
  \end{matrix}\right)$ such that
$$MW(x)M^*=\begin{pmatrix}
    w_1(x) &0\\ 0& w_2(x)
  \end{pmatrix}.$$  The entry $(1,2)$ of $MW(x)M^*$ is
  $$ x^2\big( p\, m_{11}\overline m_{21}+ (n-p)m_{12}\overline m_{22} \big)-  \big(m_{11}\overline m_{22}+m_{12}\overline m_{21}\big) n\, x+ (n-p)m_{11}\overline m_{21}
  + p\, m_{12}\overline m_{22},$$
from here we see that
  \begin{align}
  m_{11}\overline m_{22}+m_{12}\overline m_{21}&=0, \nonumber\\
\label{aux5}    p\, m_{11}\overline m_{21}+ (n-p)m_{12}\overline m_{22}&=0, \\
\label{aux6}    (n-p) m_{11}\overline m_{21}+p \,m_{12}\overline m_{22}&=0.
  \end{align}
 By combining equations \eqref{aux5} and \eqref{aux6} we have that $(n-2p)m_{11}\overline m_{21}=0$.
 The assumption $n\neq2p$, together with \eqref{dd}, implies $\det(M)=0$, which is a contradiction.
\end{proof}
\begin{remark}\label{RR}
For $n=2p$, the weight matrix $W$ reduces to two scalar weights. The corresponding scalar polynomials are Jacobi polynomials $P_w^{\alpha, \beta}$ with $(\alpha,\beta)=(n/2+1,n/2-1)$ and $(\alpha,\beta)=(n/2-1,n/2+1)$, respectively.
In fact, by taking
$M=\left(\begin{smallmatrix}
  1&1\\-1&1
\end{smallmatrix}\right)$   we have that
\begin{align*}
M W(x) M^* & = 2p\, (1-x^2)^{\tfrac n2 -1}
 \begin{pmatrix}
  (1-x)^2 & 0\\ 0 & (1+x)^2
\end{pmatrix}.
\end{align*}
\end{remark}

\begin{remark} \label{p/n-p} We have that the weight matrices $W_{p,n}$ and $W_{n-p,n}$ are similar.
In fact, by taking $M=\left(\begin{smallmatrix}   0&1\\1&0 \end{smallmatrix}\right)$ we get
$$M W_{p,n}M^*  =W_{{n-p},n}.$$
\end{remark}

\smallskip
From Proposition \ref{equivDsymm} and following  straightforward computations, one can prove the  following result.
\begin{prop}\label{ref}
  The differential operator
  $$D= \partial^2 \,(1-x^2)-\partial \,\Big( (n+2)x+2\left(\begin{smallmatrix}
  0&1\\1&0\end{smallmatrix}\right) \Big)-\left(\begin{smallmatrix}
    p&0\\0&n-p  \end{smallmatrix}\right) $$
  is symmetric with respect to the weight function $W(x)$.
\end{prop}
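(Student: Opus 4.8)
The plan is to invoke Proposition \ref{equivDsymm}. Writing $D=\partial^2F_2(x)+\partial F_1(x)+F_0$ with $F_2(x)=(1-x^2)I$, $F_1(x)=-\big((n+2)xI+2J\big)$ where $J=\left(\begin{smallmatrix}0&1\\1&0\end{smallmatrix}\right)$, and $F_0=-\left(\begin{smallmatrix}p&0\\0&n-p\end{smallmatrix}\right)$, it suffices to verify the three symmetry identities together with the two boundary conditions. The entries of $W$ are real and $W(x)=W(x)^*$, while $F_0$ and $F_1$ are real symmetric matrices (note $J=J^*$), so $F_0^*=F_0$ and $F_1^*=F_1$; and since $F_2(x)$ is a scalar matrix, $F_2W=(1-x^2)W=WF_2^*$, so the first identity holds without computation.

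For the remaining two identities I would factor $W(x)=(1-x^2)^{n/2-1}\widetilde W(x)$ with $\widetilde W(x)=\left(\begin{smallmatrix}a&b\\b&c\end{smallmatrix}\right)$, $a=px^2+n-p$, $b=-nx$, $c=(n-p)x^2+p$, so that $F_2W=(1-x^2)^{n/2}\widetilde W$. Carrying the scalar factor through the product rule, the second identity $2(F_2W)'-F_1W=WF_1^*$ collapses to the polynomial identity
\[
(1-x^2)\,\widetilde W'(x)=-2x\,\widetilde W(x)-n\left(\begin{smallmatrix}-2x & 1+x^2\\ 1+x^2 & -2x\end{smallmatrix}\right),
\]
in which the last matrix encodes the anticommutator $J\widetilde W+\widetilde WJ$ (its off-diagonal entry being $a+c=n(1+x^2)$); one checks this entrywise, each side a matrix of polynomials of degree at most $3$. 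The third identity $(F_2W)''-(F_1W)'+F_0W=WF_0^*$ is handled the same way: expand the two derivatives of $(1-x^2)^{n/2-1}\widetilde W$, factor out the lowest occurring power $(1-x^2)^{n/2-2}$, and verify the resulting identity among $2\times2$ matrices of polynomials of degree at most $4$; the off-diagonal terms there are governed by $F_0\widetilde W-\widetilde WF_0=-n(n-2p)x\left(\begin{smallmatrix}0&1\\-1&0\end{smallmatrix}\right)$.

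The boundary conditions are immediate once one observes that $0<p<n$ forces $n>0$: then $F_2(x)W(x)=(1-x^2)^{n/2}\widetilde W(x)\to0$ as $x\to\pm1$, and, using $F_1W-WF_1^*=-2[J,W]$ together with $[J,\widetilde W]=-(n-2p)(1-x^2)\left(\begin{smallmatrix}0&1\\-1&0\end{smallmatrix}\right)$, one gets $F_1W-WF_1^*=2(n-2p)(1-x^2)^{n/2}\left(\begin{smallmatrix}0&1\\-1&0\end{smallmatrix}\right)\to0$ as $x\to\pm1$.

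The only real obstacle is the bookkeeping in the second-order identity: differentiating the product $(1-x^2)^{n/2-1}\widetilde W$ twice produces several terms carrying different powers of $(1-x^2)$ and of $x$, which must be collected carefully before the cancellations appear. Once that is done every step is an elementary polynomial identity, and the substitution $p\mapsto n-p$ — which fixes $F_2$ and $J$, sends $F_0$ to $\left(\begin{smallmatrix}0&1\\1&0\end{smallmatrix}\right)F_0\left(\begin{smallmatrix}0&1\\1&0\end{smallmatrix}\right)$, and conjugates $W_{p,n}$ to $W_{n-p,n}$ — reduces each matrix identity to essentially one scalar verification.
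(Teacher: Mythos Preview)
Your proposal is correct and follows exactly the approach the paper indicates: the paper merely states that the result follows ``from Proposition~\ref{equivDsymm} and following straightforward computations,'' and you have spelled out precisely those computations. Your verification of the anticommutator $J\widetilde W+\widetilde WJ$, the commutator $[J,\widetilde W]$, and the boundary behavior via the factor $(1-x^2)^{n/2}$ is accurate, so there is nothing to add.
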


\smallskip

In the scalar case, if $D$ is a symmetric differential operator with respect to  $W$ and $\{P_w\}_{w\in \NN_0}$ is a family of eigenfunctions of $D$ with different eigenvalues, then the sequence $\{P_w\}_{w\in \NN_0}$ is automatically orthogonal with respect to $W$. In the matrix case this is not always true since
\begin{equation} \label{sim}
\Lambda_w \langle P_w,P_{w'}\rangle=\langle P_w D, P_{w'}\rangle= \langle P_w, P_{w'} D\rangle= \langle P_w, P_{w'} \rangle\Lambda_{w'}
\end{equation}
does not imply that $\langle P_w,P_{w'}\rangle=0$, for $w\neq w'$. Therefore, we prove the orthogonality in the next theorem.

\begin{thm}\label{PwortogonalW}
When $n\ne 2p$  the matrix polynomials $\{P_w\}_{w\in\NN_0} $ are orthogonal polynomials with respect to the matrix valued inner product
  $$  \langle P,Q \rangle = \int_{-1}^1 P(x) W(x) Q(x)^*\,dx.$$
\end{thm}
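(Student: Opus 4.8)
The plan is to derive the orthogonality from the two facts already available: by Theorem~\ref{operatorD} each $P_w$ satisfies $P_wD=\Lambda_wP_w$ with $\Lambda_w$ the diagonal eigenvalue listed there, and by Proposition~\ref{ref} the operator $D$ is symmetric with respect to $W$. Since the leading coefficient of $P_w$ is the nonsingular scalar matrix~\eqref{leadcoef}, it will be enough to check that $\langle P_w,P_m\rangle=0$ for $w\neq m$ in order to conclude that $\{P_w\}_{w\ge0}$ is a sequence of matrix valued orthogonal polynomials in the sense of Section~\ref{sec-MOP}. By the Hermitian symmetry of $\langle\cdot,\cdot\rangle$ we may assume $w>m$, and writing $P_m(x)=\sum_{i=0}^m A_ix^i$ with $A_i\in\operatorname{Mat}_2(\CC)$ one gets $\langle P_w,P_m\rangle=\sum_{i=0}^m\langle P_w,x^iI\rangle A_i^*$; hence everything reduces to proving that $\langle P_w,x^jI\rangle=0$ for all $0\le j<w$.

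The first step is to record the action of $D$ on a pure power. A direct computation from the definition of $D$ gives $(x^jI)D=x^j\Lambda_j+j(j-1)x^{j-2}I-2jx^{j-1}\left(\begin{smallmatrix}0&1\\1&0\end{smallmatrix}\right)$; in particular $D$ maps the space of polynomials of degree $\le j$ into itself and acts on the top coefficient by right multiplication by $\Lambda_j$, leaving an error of degree $\le j-1$ whose coefficients are $I$ and $\left(\begin{smallmatrix}0&1\\1&0\end{smallmatrix}\right)$.

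The second and main step is an induction on $j$ for the identity above. Applying the symmetry of $D$ to the pair $P_w,\,x^jI$, using $P_wD=\Lambda_wP_w$, and feeding in the formula just obtained, one gets $\Lambda_w\langle P_w,x^jI\rangle=\langle P_w,x^jI\rangle\Lambda_j$ together with a linear combination of terms $\langle P_w,x^iI\rangle$, $i\le j-1$, which vanish by the inductive hypothesis (the adjoints $\Lambda_j^*$ are dropped since the eigenvalues are real). Hence $\Lambda_wC=C\Lambda_j$ with $C:=\langle P_w,x^jI\rangle$, and reading this entrywise forces $C_{rs}=0$ unless the $r$-th diagonal entry of $\Lambda_w$ coincides with the $s$-th diagonal entry of $\Lambda_j$. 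Because $w\mapsto w(w+n+1)$ is strictly increasing on $\NN_0$ (here $n>p>0$), the ``aligned'' coincidences $(\Lambda_w)_{11}=(\Lambda_j)_{11}$ and $(\Lambda_w)_{22}=(\Lambda_j)_{22}$ can only occur when $w=j$, so they are excluded. The remaining ``crossed'' coincidences $(\Lambda_w)_{11}=(\Lambda_j)_{22}$ and $(\Lambda_w)_{22}=(\Lambda_j)_{11}$ are exactly the obstruction to orthogonality flagged just before the statement, and disposing of them is the only nontrivial point: they amount to $w(w+n+1)-j(j+n+1)=\pm(n-2p)$, and for $w>j\ge0$ the left-hand side is at least $2j+n+2\ge n+2$, whereas the hypothesis $0<p<n$ forces $|n-2p|<n<n+2$, so neither equation has a solution. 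Therefore $C=0$, the induction goes through, and the expansion of the first paragraph yields $\langle P_w,P_m\rangle=0$ for $w\neq m$; together with~\eqref{leadcoef} this proves the theorem. I expect the crossed-coincidence estimate, and the role of the hypothesis $0<p<n$ in it, to be the crux of the argument.
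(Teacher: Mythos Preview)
Your proof is correct and rests on exactly the same mechanism as the paper's: the symmetry of $D$ with respect to $W$, the diagonal eigenvalue $\Lambda_w$, and the verification that $(\Lambda_w)_{rr}\neq(\Lambda_{w'})_{ss}$ whenever $w\neq w'$ (your crossed-coincidence estimate using $0<p<n$ is precisely the nontrivial check the paper has in mind). The only difference is that the paper applies the symmetry directly to the pair $P_w,P_{w'}$---since both are eigenfunctions, one gets $\Lambda_w\langle P_w,P_{w'}\rangle=\langle P_w,P_{w'}\rangle\Lambda_{w'}$ in one line, bypassing your reduction to monomials $x^jI$ and the accompanying induction.
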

\begin{proof}
We know that  $P_w$ is a polynomial of degree $w$ and its leading coefficient is a nonsingular diagonal matrix (see \eqref{leadcoef}).
We only have to verify that for $ w\neq w'$,
$\langle P_{w},P_{w'} \rangle _W  =0$.
Since $P_w$ is an eigenfunction of the differential operator $D$, which  is  symmetric with respect to $W$, we have that \eqref{sim} holds
with $$\Lambda_w=
\left(
\begin{smallmatrix}
\lambda_{w,1}&0\\0&\lambda_{w,2}
\end{smallmatrix}
\right)
=
\left(
\begin{smallmatrix}
-w(w+n+1)-p&0\\0&-w(w+n+1)-n+p
\end{smallmatrix}
\right),
$$
see Theorem \ref{operatorD}. Therefore,  for $i,j=1,2$ we have
$\lambda_{w,i} \langle P_{w,i},P_{w', j} \rangle
=\lambda_{w',j} \langle P_{w,i},P_{w', j} \rangle,$
 where $P_{w,i}$ is the $i$-th row of the  polynomial $P_w$,
and
\begin{equation*}
 \langle P_{w,i},P_{w', j} \rangle
 =\int_{-1}^1  P_{w,i}(x)\,W(x) P_{w',j}^*(x) \, dx \in \CC.
\end{equation*}

It is not difficult to verify that $\lambda_{w,i}\neq\lambda_{w',j}$, for  $w\neq w'$  or $i\neq j$.
Then
we have
\begin{equation} \label{normaPwdiagonal}
 \langle P_{w,i},P_{w', j} \rangle=0,
 \quad \text{ for } w\neq w' \text{ or } i\neq j.
\end{equation}
Therefore $\langle P_{w},P_{w'} \rangle   =0$, for $w\neq w'$, which concludes the proof of the theorem.
\end{proof}

\begin{remark}\label{comparacionK}
Recently, in \cite{KRR14} the authors study some families on matrix valued polynomials, depending on one real parameter $\nu >0 $, of arbitrary size $(2\ell+1)\times (2\ell+1)$ with $\ell\in \frac 12 \NN$. These weights are not irreducible. For  $\ell=1, \frac 32, 2$  appears some  irreducible  $2\times 2$ blocks $W_+^{(\nu)}$ and $W_-^{(\nu)}$. See Remark 2.8 (ii) there.

The case $\ell=3/2$ does not match with the examples considered in this paper. The cases $\ell=1$ and $\ell=2$ are particular cases of our weight matrices $W_{p,n}$ by choosing  our parameters $(p,n)=(\nu, 2\nu+1)$ and   $(p,n)=(\nu, 2\nu+3)$, for  $\ell=1$ and $\ell=2$ respectively.
In fact, with
$L=\left(\begin{smallmatrix}
  0& \sqrt 2 \\ -1 & 0
\end{smallmatrix}\right)$ and $D=\left(\begin{smallmatrix}
  1& 0\\0 & -2
\end{smallmatrix}\right)$ we get
\begin{align*}
W_+^{(\nu)}&=  \frac{(\nu+2)}{(2\nu+1)}L\, W_{\nu, 2\nu+1 } L^* \quad  \qquad  \qquad \text{ for }\ell=1,\\
W_-^{(\nu)}&= \frac{(\nu+4)(\nu+2)}{(2\nu+1)(2\nu+3)} D\, W_{\nu, 2\nu+3 } D^* \quad  \text{ for }\ell=2.
\end{align*}
The case $\nu=1$ was  previously studied in \cite{KPR12}  and \cite{KPR13}.
\end{remark}

\section{Three-term recursion relation}\label{sec-ttrr}

The main result of this section is a three-term recursion relation satisfied by the sequence of orthogonal polynomials studied in this paper. We give a  proof by using some properties of the Gegenbauer polynomials.

\begin{thm}\label{ttrrthm}
The orthogonal polynomials $\{P_w\}_{w\in\NN_0}$ satisfy the three-term recursion relation
$$ x\,P_w(x)= A_w P_{w-1}(x)+B_wP_w(x)+C_w P_{w+1}(x), $$
where
\begin{align*}
  A_w& = \begin{pmatrix}
  \tfrac{(n+w)(p+w-1)(n-p+w+1)}{(p+w)(n-p+w)(2w+n+1)} &0\\0&  \tfrac{(n+w)(p+w+1)(n-p+w-1)}{(p+w)(n-p+w)(2w+n+1)}
\end{pmatrix},\\ \mbox{} \\
 B_w &=\begin{pmatrix}
    0& \tfrac {-p}{(p+w)(p+w+1)}\\ \tfrac {-(n-p)}{(n-p+w)(n-p+w+1)}& 0
  \end{pmatrix}\, , \qquad C_w=\tfrac{w+1}{2w+n+1}I.
\end{align*}
\end{thm}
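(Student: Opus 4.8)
The plan is to verify the stated identity
\[
x\,P_w(x)=A_w\,P_{w-1}(x)+B_w\,P_w(x)+C_w\,P_{w+1}(x)
\]
directly and entrywise, starting from the closed formulas \eqref{Pwdef} and the classical recursions for the Gegenbauer polynomials. A preliminary reduction cuts the work in half. Writing $S=\left(\begin{smallmatrix}0&1\\1&0\end{smallmatrix}\right)$ and recording the dependence on $p$ by a superscript, one reads off from \eqref{Pwdef} that $S\,P_w^{(p)}\,S=P_w^{(n-p)}$; moreover $S\,A_w^{(p)}\,S=A_w^{(n-p)}$, while $S\,B_w^{(p)}\,S=B_w^{(n-p)}$ and $S\,C_w^{(p)}\,S=C_w^{(n-p)}$ are immediate from the anti-diagonal, resp.\ scalar, form of these matrices. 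Conjugating the desired recursion by $S$ therefore turns it into the same recursion with $p$ replaced by $n-p$; since the admissible range $0<p<n$ is invariant under $p\mapsto n-p$, it suffices to establish the $(1,1)$ and $(1,2)$ entries for all such $p$, and the $(2,2)$ and $(2,1)$ entries then come for free.

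The second step is to assemble the Gegenbauer identities needed to line both sides up in a common basis. Only the polynomials $C_\bullet^{\frac{n+1}2}$, $C_\bullet^{\frac{n+3}2}$, $C_\bullet^{\frac{n+5}2}$ occur. From the three-term recursion \eqref{bb} with $\lambda=\tfrac{n+3}2$, $m=w-1$, together with the contiguous relation \eqref{new} taken with $\lambda=\tfrac{n+3}2$, $m=w-1$, one derives the two \emph{mixing} formulas
\begin{align*}
x\,C_{w-1}^{\frac{n+3}2}(x)&=\tfrac{w}{n+1}\,C_w^{\frac{n+1}2}(x)+C_{w-2}^{\frac{n+3}2}(x),\\
C_w^{\frac{n+3}2}(x)&=\tfrac{2w+n+1}{n+1}\,C_w^{\frac{n+1}2}(x)+C_{w-2}^{\frac{n+3}2}(x);
\end{align*}
these, together with \eqref{bb} applied to $C_w^{\frac{n+1}2}$, allow $x$ times any Gegenbauer polynomial appearing in an entry of $P_w$ to be rewritten as an explicit linear combination of those appearing in the entries of $P_{w-1}$, $P_w$ and $P_{w+1}$.

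Thirdly I would carry out the two surviving entries. For entry $(1,2)$: the left-hand side $\tfrac1{p+w}\,x\,C_{w-1}^{\frac{n+3}2}$ becomes, by the first mixing formula, $\tfrac1{p+w}\bigl(\tfrac{w}{n+1}C_w^{\frac{n+1}2}+C_{w-2}^{\frac{n+3}2}\bigr)$; on the right, since $A_w$ is diagonal and $B_w$ anti-diagonal, the $(1,2)$ entry equals $(A_w)_{11}(P_{w-1})_{12}+(B_w)_{12}(P_w)_{22}+C_w\,(P_{w+1})_{12}$, and after replacing $(P_{w+1})_{12}=\tfrac1{p+w+1}C_w^{\frac{n+3}2}$ via the second mixing formula both sides are combinations of $C_w^{\frac{n+1}2}$ and $C_{w-2}^{\frac{n+3}2}$. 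Equating the two coefficients reduces the claim to two rational identities in $p$, $n$, $w$, each checked after clearing denominators. Entry $(1,1)$ is entirely analogous, the only extra step being that $x\,C_w^{\frac{n+1}2}$ must first be expanded by \eqref{bb} and the resulting $C_{w-1}^{\frac{n+1}2}$ re-expressed in the $\tfrac{n+3}2$-family by the index-shifted second mixing formula; collecting terms again reduces to a small number of polynomial identities in $p$, $n$, $w$.

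I expect the only genuine difficulty to be bookkeeping: tracking indices and parameters while moving between the $C_\bullet^{\frac{n+1}2}$, $C_\bullet^{\frac{n+3}2}$ and $C_\bullet^{\frac{n+5}2}$ families, and then verifying the handful of rational-function identities produced by the coefficient comparisons. There is no conceptual obstacle: the existence of \emph{some} three-term recursion $x\,P_w=\widetilde A_w P_{w-1}+\widetilde B_w P_w+\widetilde C_w P_{w+1}$ is already guaranteed by the orthogonality of $\{P_w\}$ (Theorem \ref{PwortogonalW}) together with the nonsingularity of its leading coefficient, and comparing degree-$(w+1)$ coefficients forces $\widetilde C_w=\tfrac{w+1}{2w+n+1}I$ at once via \eqref{leadcoef}, which serves as a convenient consistency check on the whole computation.
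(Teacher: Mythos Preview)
Your approach is essentially the same as the paper's: both verify the $(1,1)$ and $(1,2)$ entries directly from the Gegenbauer three-term recursion \eqref{bb} together with the contiguous relation \eqref{new} (the paper's \eqref{relation}), and both dispose of the second row via the $p\leftrightarrow n-p$ symmetry. Your ``mixing formulas'' are precisely the combinations of \eqref{bb} and \eqref{new} that the paper invokes on the fly, and your coefficient comparison in the basis $\{C_w^{(n+1)/2},\,C_{w-2}^{(n+3)/2}\}$ (for the $(1,2)$ entry) is just a tidier bookkeeping of the same algebra; the mention of $C_\bullet^{(n+5)/2}$ is unnecessary here, and the orthogonality remark fixing $C_w$ via \eqref{leadcoef} is a nice sanity check that the paper does not make explicit.
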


\begin{proof}

To verify the $(1,1)$-entry of the equation in the statement of the theorem  we need to prove that
\begin{equation}\label{aux1}
\begin{split}
  x \Big( \tfrac 1{n+1}& C_w^{{\frac{n-1}2}-1}(x)+ \tfrac 1{p+w}C^{\frac{n+3}2}_{w-2}(x) \Big)
  =\tfrac{(n+w)(p+w-1)(n-p+w+1)}{(2w+n+1)(p+w)(n-p+w)}\left(\tfrac 1{n+1}C_{w-1}^{{\frac{n-1}2}-1}(x)+\tfrac 1{p+w}C^{\frac{n+3}2}_{w-3}(x)\right)\\
  &\quad -\tfrac p{(p+w)(p+w+1)(n-p+w)}C_{w-1}^{\frac{n+3}2}(x)
   + \tfrac{w+1}{2w+n+1} \left(\tfrac 1{n+1}C_{w+1}^{{\frac{n-1}2}-1}(x)+\tfrac 1{p+w-1}C^{\frac{n+3}2}_{w-1}(x)\right).
\end{split}
\end{equation}

By replacing the identities given by \eqref{bb} for $\lambda=\frac{n+1}2$, $m=w$ and $\lambda=\frac{n+3}2$, $m=w-2$,
one obtain that \eqref{aux1} is equivalent to
\begin{equation}\label{aux2}
\begin{split}
  &\tfrac{(w+n)}{(n+1)(2w+n+1)} \left(  -1+ \tfrac {(p+w-1)(n-p+w-1)} {(p+w)(n-p+w)} \right) C_{w-1}^{{\frac{n+1}2}}(x)\\
& \quad   + \left(  -\tfrac{p}{(p+w)(p+w+1)(n-p+w)}+ \tfrac{w+1}{(2w+n+1)(p+w+1)}-\tfrac{w-1}{(p+w)(2w+n-1)}\right) C_{w-1}^{\frac{n+3}2}(x)\\
& \quad  +\tfrac{(n+w)}{p+w} \left( (\tfrac{n-p+w-1}{(2w+n+1)(n-p+w)}-\tfrac 1{2w+n-1} \right)C_{w-3}^{{\frac{n+3}2}}(x)=0.
\end{split}
\end{equation}

\noindent Thus, by using the  relation \eqref{new} for $\lambda=\frac{n+3}{2}$ and $m=w-2$,
the identity in \eqref{aux2} follows after some straightforward computations.

\smallskip

Now we verify that the equation for the $(1,2)$-entry in the statement of the theorem holds. We need to verify that the following identity holds
\begin{equation}\label{aux3}
  \begin{split}
    \tfrac 1{p+w}&\,x\, C_{w-1}^{\frac{n+3}2}(x)= \tfrac{(n+w)(n-p+w+1)}{(p+w)(2w+n+1)(n-p+w)}C_{w-2}^{\frac{n+3}2}(x) \\ &- \tfrac{p}{(p+w)(p+w+1)}\left( \tfrac 1{n+1}C_w^{{\frac{n+1}2}}(x) +\tfrac 1{n-p+w} C_{w-2}^{\frac{n+3}2} (x)   \right)
   + \tfrac{w+1}{(2w+n+1)(p+w+1)} C_w^{\frac{n+3}2}(x).
  \end{split}
\end{equation}

\noindent From \eqref{new} for $\lambda=\frac{n+3}{2}$ and  $m=w-1$ we have that the right-hand side of \eqref{aux3} is
\begin{align*}
     \tfrac{n+w+1}{(p+w)(2w+n+1)} C_{w-2}^{\frac{n+3}2}(x) + \tfrac{w}{(p+w)(2w+n+1)}C_{w}^{\frac{n+3}2}(x).
  \end{align*}
Therefore, \eqref{aux3} is proved, since it is equivalent to \eqref{bb} with $\lambda=\tfrac{n+3}2$ and $m=w-1$.

For  the entries $(2,2)$ and $(2,1)$ we proceed in a similar way, by observing that we need to do the same computations as in the cases $(1,1)$ and $(1,2)$ respectively,  by changing $p$ by $n-p$.
This concludes the proof of the theorem.
\end{proof}

The  sequence  of monic orthogonal polynomials is given by
\begin{equation}\label{monicos}
Q_w=   \frac {w!(n+1)}{2^{w} \left(\tfrac{n+1}2\right)_{w}}  P_w, \qquad {w\in\NN_0}.
\end{equation}
 The first polynomials of the sequence $\{Q_w\}_{w\in \NN_0}$ are
\begin{align*}Q_0&=\operatorname{Id},
\qquad Q_1=\begin{pmatrix}
  x& \tfrac 1{p+1}\\ \tfrac 1{n-p+1}& x
\end{pmatrix},  \qquad
Q_2 =\begin{pmatrix}
  x^2-\tfrac p{(n+3)(p+2)} & \frac {2}{p+2}x \\ \mbox{} \displaybreak[0]\\ \frac{2}{n-p+2}x & x^2-\tfrac{n-p}{(n+3)(n-p+2)}
\end{pmatrix}, \\
Q_3&=\begin{pmatrix}
x^3-\frac{3(p+1)}{(n+5)(p+3)}  x & \frac{3}{p+3}x^2-\frac{3}{(n+5)(p+3)}\\ \mbox{} \displaybreak[0]\\
\frac{3}{n-p+3}x^2-\frac{3}{(n+5)(n-p+3) }& x^3-\frac{3(n-p+1)}{(n+5)(n-p+3)}  x
\end{pmatrix}.
\end{align*}

\smallskip
\begin{remark}
  Observe that from \eqref{normaPwdiagonal} and \eqref{monicos} we have that  $\langle Q_w,Q_w\rangle$ is always a diagonal matrix. Moreover one can verify that
\begin{align*}
 \langle Q_w,Q_w\rangle  =\| Q_w\|^2=
\frac{\pi2^{[w/2]}\Gamma(n/2+1+[w/2]) }{w!(n+2w+1)\Gamma((n+3)/2)}\prod_{k=1}^{[(w-1)/2]}(n+2k+1)
\begin{pmatrix}
    \frac{p\,(n-p+w+1)}{p+w} &0\\0& \frac{(n-p)(p+w+1)}{n-p+w}
  \end{pmatrix}.
\end{align*}
\end{remark}

\section{The algebra $\mathcal D(W)$}\label{sec-dw}

In this section we discuss some properties of the structure of the algebra $\mathcal D(W)$, defined  in \eqref{algDW}, for our weight matrix $W(x)$
introduced in \eqref{peso-x}. We are not interested in the cases when $p=n-p$, since the weight reduces to classical scalar weights, see Remark \ref{RR}.
We observe that  in our example, the polynomials $\{P_w\}_{w\in \NN_0}$, given in \eqref{Pwdef},
and the monic orthogonal polynomials $\{Q_w\}_{w\in \NN_0}$
have the same sequence of eigenvalues, since they are related by a scalar multiple, see \eqref{monicos}.

\smallskip
First of all we observe that the space of differential operators of {\em order zero } in $\mathcal D(W)$ consists of scalar multiplies of the identity operator.
In fact, a differential operator of order zero, having the sequence of monic orthogonal polynomials $\{Q_w\}_w$ as eigenfunctions,
is a constant matrix $L$  such that
$$Q_wL=\Lambda_w\,Q_w,\qquad \text{ for all } {w\in\NN_0}.$$
From \eqref{eigenvaluemonicos} we have that $\Lambda_w=L$ for every $w$. When $w=1$, we obtain that the entries of $L$ satisfy $L_{11}=L_{22}$ and $(p+1)L_{12}=(n-p+1)L_{21}$. Thus, looking at the case $w=2$ we get $(n-2p)L_{12}=0$.
Therefore we obtain that any operator of order zero $L$ in $\mathcal D(W)$  is a multiple of the identity matrix.

\medskip
Now we study differential operators of  order at most two in the algebra $\mathcal D(W)$.
Let $\{Q_w\}_{w\in \NN_0}$ the  sequence of monic orthogonal polynomials with respect to $W$ and $D$ a differential operator of order at most two in $\mathcal D(W)$. From Proposition \ref{eigenvalue-prop} we have
\begin{equation*}
  D=\partial^2 (A_2 x^2+A_1 x+A_0) + \partial (B_1x+B_0)+C \,\in \mathcal D(W)
\end{equation*}
if and only if
\begin{equation*}
Q_wD=\big(w(w-1)A_2+wB_1+C\big)Q_w, \qquad \text{ for all ${w\in\NN_0}$}.
\end{equation*}
Here $A_2, A_1, A_0, B_1, B_0, C$ are $2\times 2$ complex matrices. Let us denote $Q_{w,j}$ the coefficients of the polynomial $Q_w$, i.e.,
$Q_w=\sum_{j=0}^w Q_{w,j}\,x^j$, with $Q_{w,w}=I$. Therefore
$D\in \mathcal D(W)$ if and only if
\begin{equation*}
  \begin{split}
    j(j-1)&Q_{w,j}A_2+j(j+1)Q_{w,j+1}A_1+(j+1)(j+2)Q_{w,j+2}A_0+jQ_{w,j}B_1\\
    & +(j+1)Q_{w,j+1}B_0+Q_{w,j}C-\big(w(w-1)A_2+wB_1+C\big)Q_{w,j}=0
  \end{split}
\end{equation*}
for all ${w\in\NN_0}$ and $j=0,\dots , w$.
For $j=w-1$  and $j=0$ we respectively obtain
\begin{equation}\label{eqj=w-1}
  \begin{split}
    (w-&1)(w-2)Q_{w,w-1}A_2+w(w-1)A_1+(w-1)Q_{w,w-1}B_1+ w B_0
    +Q_{w,w-1}C \\
    &-\big(w(w-1)A_2+wB_1 +C\big)Q_{w,w-1}=0
  \end{split}
\end{equation}
and
\begin{equation}\label{eqj=0}
  2Q_{w,2}A_0+Q_{w,1}B_0+Q_{w,0}\,C-\big(w(w-1)A_2+wB_1+C\big)Q_{w,0}=0.
\end{equation}

Now from \eqref{eqj=w-1} considering $w=1$ and $w=2$, and from \eqref{eqj=0} considering $w=2$, we respectively obtain
\begin{align*}
  B_0&=(B_1+C)Q_{1,0}-Q_{1,0}C,\qquad
  2A_1=(2A_2+2B_1+C)Q_{2,1}-Q_{2,1}B_1-2B_0-Q_{2,1}C,\\
  2A_0&=(2A_2+2B_1+C)Q_{2,0}-Q_{2,1}B_0-Q_{2,0}C.
\end{align*}

\noindent From the  expression of $Q_1$ and $Q_2$, given at the end of Section \ref{sec-ttrr}, we know that
\begin{align*}
 Q_{1,0}=
\begin{pmatrix}
  0& \tfrac 1{p+1}\\ \tfrac 1{n-p+1}&
\end{pmatrix},&&
Q_{2,1} =\begin{pmatrix}
 0 & \frac {2}{p+2} \\ \frac{2}{n-p+2} &  0
\end{pmatrix},&&
Q_{2,0}= \tfrac{-p}{(n+3)}\begin{pmatrix}
  \tfrac 1{(p+2)} &0\\0 &  \tfrac{1}{(n-p+2)}
\end{pmatrix}.
\end{align*}

\noindent By using \eqref{monicos} and \eqref{Pwdef} it is easy to see that
$$Q_{w,w-1}=\begin{pmatrix}
  0& \tfrac w{p+w}\\ \tfrac w{n-p+w}
\end{pmatrix}, \qquad \text{ for all } {w\in\NN}.$$

To determine the matrices $A_2=(a_{ij})$, $B_1=(b_{ij})$ and $C=(c_{ij})$, we first combine the entries in the diagonal of the matrix \eqref{eqj=w-1} to obtain
\begin{align*}
  2(n+2)a_{21}&=\frac {\big((n+p+2) b_{21}-2c_{21}\big)} {p+1} + \frac{(p+2)(p+w)(2c_{12}-(n-p)b_{12})}{(n-p+1)(n-p+2)(n-p+w)}, \\
  2(n+2)a_{12}&=\frac {\big((2n-p+2) b_{12}-2c_{12}\big)} {n-p+1} + \frac{(n-p+2)(n-p+w)(2c_{21}-p\,b_{21})}{(p+1)(p+2)(p+w)}.
\end{align*}

Since these identities are valid for any integer $w\geq 3$ we conclude that, if $n\neq 2p$ then
 $ 2c_{12}=(n-p)b_{12}$ and $2c_{21}=p\, b_{21}.$
Therefore
$b_{21}=2(p+1)a_{21}$
and $b_{12}=2(n-p+1)a_{12}$.

By combining the nondiagonal entries  of \eqref{eqj=w-1} we have
\begin{align*}
  (n-2p+1)\big( (n+2)a_{11}-b_{11}\big)= (n-2p-1)\big( (n+2)a_{22}-b_{22}\big)
\end{align*}
and
$$c_{11}-c_{22}= (p+1)(p+2)a_{22}-p(p+1)a_{11}+p\,b_{11}-(p+1)b_{22}.$$

Equation \eqref{eqj=0} with $w=3$ says that
$$ 2Q_{3,2}A_0+Q_{3,1}B_0+Q_{3,0}\,C-\big(6A_2+3B_1+C\big)Q_{3,0}=0.$$
Now, by using the expression of $Q_3=x^3+Q_{3,2}x^2+Q_{3,1}x+Q_{3,0}$ given at the end of Section \ref{sec-ttrr}, it is not difficult to see that
 $ b_{11}=(n+2)a_{11}$.
Thus
  $  b_{22}=(n+2)a_{22}$,
  and $  c_{11}-c_{22}=p(n-p+1)a_{11}-(p+1)(n-p)a_{22}$.

Therefore, the matrices $A_2 ,A_1 ,A_0,B_1,B_0,C$ are given in terms of the entries of $A_2$ and $c_{11}$, as we state in the following theorem.

\begin{thm} \label{order2op}
  The differential operators  of order at most two in $\mathcal D(W)$ are of the form
$$ D=\partial^2 F_2(x)+\partial F_1(x)+F_0,$$
where
\begin{equation} \label{coeffD}
   \begin{split}
  F_2(x) =& x^2\begin{pmatrix} a_{11}& a_{12}\\ a_{21}&a_{22} \end{pmatrix}
+ x \begin{pmatrix}  a_{12}-a_{21}& a_{11}-a_{22}\\ a_{22}-a_{11} & a_{21}-a_{12} \end{pmatrix}
+ \begin{pmatrix}   a_{22}& a_{21}\\ a_{12}&a_{11} \end{pmatrix} ,
\\
F_1(x)=&x\!\begin{pmatrix}   (n+2) a_{11}\!& 2(n-p+1)a_{12}\\2(p+1)a_{21}\!& (n+2)a_{22} \end{pmatrix}
 +\begin{pmatrix} -pa_{21}+ (n-p+2)a_{12}& (n-p+2)a_{11}-(n-p)a_{22}\\ -p a_{11}+(p+2)a_{22 }& (p+2)a_{21}-(n-p)a_{12}
\end{pmatrix},
\\
F_0=& \begin{pmatrix} p\,(n-p+1)a_{11} +c& (n-p)(n-p+1)a_{12}\\ p\,(p+1)a_{21} & (p+1)(n-p)a_{22}+c
\end{pmatrix}.
   \end{split}
 \end{equation}
with $a_{11}$, $a_{12}$, $a_{21}$, $a_{22}$, $c$ arbitrary complex numbers.
\end{thm}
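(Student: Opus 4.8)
The statement is essentially a summary of the calculations already carried out in the paragraphs immediately preceding it; my task is to organize those computations into a clean proof. The strategy is to start from the general ansatz
\[
D=\partial^2(A_2x^2+A_1x+A_0)+\partial(B_1x+B_0)+C\in\mathcal D(W)
\]
and use the characterization \eqref{DinDW}, namely $Q_wD=\big(w(w-1)A_2+wB_1+C\big)Q_w$ for all $w\ge0$, where $\{Q_w\}$ is the monic sequence. Expanding this identity coefficient by coefficient yields, for each $w$ and each $j=0,\dots,w$, the recursion displayed in the text; the two extreme cases $j=w-1$ and $j=0$ (equations \eqref{eqj=w-1} and \eqref{eqj=0}) contain enough information to pin down all six matrix coefficients. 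First I would record the explicit low-degree monic polynomials $Q_1,Q_2,Q_3$ from the end of Section \ref{sec-ttrr}, together with the closed form $Q_{w,w-1}=\left(\begin{smallmatrix}0&w/(p+w)\\ w/(n-p+w)&0\end{smallmatrix}\right)$, which follows from \eqref{monicos} and \eqref{Pwdef}.

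**Key steps.** (1) From \eqref{eqj=w-1} with $w=1,2$ and \eqref{eqj=0} with $w=2$, solve for $B_0$, $A_1$, $A_0$ in terms of $A_2$, $B_1$, $C$ and the known matrices $Q_{1,0}$, $Q_{2,1}$, $Q_{2,0}$. (2) Impose \eqref{eqj=w-1} for a general $w\ge3$: comparing the diagonal entries gives two identities that must hold for all $w$, and since the $w$-dependence is nontrivial, the coefficient of the $w$-dependent term must vanish; using $n\ne 2p$ this forces $2c_{12}=(n-p)b_{12}$ and $2c_{21}=p\,b_{21}$, hence $b_{21}=2(p+1)a_{21}$ and $b_{12}=2(n-p+1)a_{12}$. (3) Comparing the off-diagonal entries of \eqref{eqj=w-1} yields the relation between $(n+2)a_{11}-b_{11}$ and $(n+2)a_{22}-b_{22}$ and an expression for $c_{11}-c_{22}$. (4) Finally, feed \eqref{eqj=0} with $w=3$ (using the explicit $Q_3$) to extract $b_{11}=(n+2)a_{11}$, whence $b_{22}=(n+2)a_{22}$ and the stated formula for $c_{11}-c_{22}$. (5) Back-substitute everything into the expressions for $A_1,A_0,B_0$ from step (1) to obtain the closed forms \eqref{coeffD}; renaming $A_1$ to the "$(a_{ij})$ matrix" (the text's $A_1$, which in the theorem statement plays the role of the quadratic coefficient's $x^2$-part) and setting $c=c_{22}$ collects the free parameters. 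Conversely, one checks that for arbitrary $a_{11},a_{12},a_{21},a_{22},c$ the operator with coefficients \eqref{coeffD} does satisfy \eqref{DinDW}, so the description is exact.

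**Main obstacle.** The only genuinely delicate point is step (2)–(4): one must be careful that the identities extracted from \eqref{eqj=w-1} and \eqref{eqj=0} are being used as identities in $w$, so that matching the $w$-dependent and $w$-independent parts separately is legitimate, and that each new equation is independent of the previous ones (so that the five free parameters are not over- or under-determined). This bookkeeping — together with the several rational-function simplifications needed to recognize, e.g., that $\frac{(p+2)(p+w)}{(n-p+1)(n-p+2)(n-p+w)}\cdot(\text{stuff})$ has no net $w$-dependence only after imposing $2c_{12}=(n-p)b_{12}$ — is where all the work lies; everything else is linear algebra over $\CC(p,n)$. The hypothesis $n\ne2p$ is used exactly once, in step (2), to cancel the factor $(n-2p\pm1)$ that would otherwise leave $a_{12},a_{21}$ unconstrained by these particular relations; I would flag that explicitly. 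No new ideas beyond Proposition \ref{eigenvalue-prop} and the explicit $Q_w$ are required, so the proof is a direct, if somewhat lengthy, verification.
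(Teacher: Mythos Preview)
Your steps (1)--(4) for the necessity direction are exactly what the paper does in the paragraphs preceding the theorem, so that half is fine. The divergence is in your step (5), the sufficiency direction: you write ``Conversely, one checks that for arbitrary $a_{11},a_{12},a_{21},a_{22},c$ the operator with coefficients \eqref{coeffD} does satisfy \eqref{DinDW}.'' But this is precisely the nontrivial part, and ``one checks'' is not a proof. Verifying $Q_wD=\Lambda_w(D)Q_w$ for \emph{all} $w\ge0$ directly would require Gegenbauer computations of the same flavor as Theorem~\ref{operatorD}, carried out for a five-parameter family of operators rather than a single one; you have not indicated how you would do this, and the equations \eqref{eqj=w-1}, \eqref{eqj=0} you have used so far only cover finitely many $(w,j)$.

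The paper avoids this entirely by a different argument. Having shown $\dim_\CC\mathcal D_2\le 5$ from necessity, it then uses Proposition~\ref{equivDsymm} to check that an operator with coefficients \eqref{coeffD} is \emph{symmetric} with respect to $W$ exactly when $a_{11},a_{22},c\in\RR$ and $p\,a_{21}=(n-p)\,\overline{a_{12}}$; this is a finite system of differential identities in $x$, not an infinite family indexed by $w$. Proposition~\ref{symoperinD(W)} then says every symmetric $D\in\mathcal D$ lies in $\mathcal D(W)$, producing a real $5$-dimensional space of symmetric operators of order $\le2$. Since $\mathcal D(W)=\mathcal S(W)\oplus i\mathcal S(W)$, this forces $\dim_\CC\mathcal D_2\ge5$, hence equality, and every operator of the form \eqref{coeffD} is accounted for. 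This symmetry trick is the missing idea in your proposal; without it (or an explicit all-$w$ verification) the converse direction is a genuine gap.
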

\begin{proof}
We have already proved that any differential operator of order at most two in $\mathcal D(W)$ is of this form for some constant $a_{11}, a_{12}, a_{21}, a_{22}, c\in \CC$.
Let  $\mathcal D_2$ be the complex  vector space of the differential operators in $\mathcal D(W)$ of order at most two. Then we have that
$\operatorname {dim} \mathcal D_2\leq 5.$

From Proposition \ref{equivDsymm} it is not difficult to see that a differential operator $D$ of order two,  with coefficients given by  \eqref{coeffD}, is a symmetric operator if and only if
\begin{equation*}\label{opsymm2}
  a_{11}, a_{22}, c\in \RR \qquad \text{and} \quad p \,a_{21}= (n-p)\,\overline a_{12}.
\end{equation*}

From Proposition \ref{symoperinD(W)} any symmetric operator $D\in \mathcal D$ belongs to the algebra $\mathcal D(W)$.
Thus there exists (at least) five $\RR$-linearly independent symmetric operators in $\mathcal D_2$. Therefore
$\dim \mathcal D_2=5$ and
this concludes the proof of the theorem.
\end{proof}

\begin{cor}
  There are no operators of order one in the algebra $\mathcal D(W)$.
\end{cor}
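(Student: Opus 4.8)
The corollary states that there are no operators of order one in $\mathcal D(W)$, and the plan is to deduce this directly from the structure of $\mathcal D_2$ obtained in Theorem \ref{order2op}. The key observation is that a differential operator of order exactly one would be a nonzero element $D = \partial F_1(x) + F_0 \in \mathcal D(W)$ with $F_1$ of degree at most one and $F_0$ constant, i.e.\ an element of $\mathcal D_2$ whose second-order coefficient $F_2$ vanishes identically. So the first step is to impose $F_2(x) = 0$ in the parametrization \eqref{coeffD} and read off what this forces on the parameters $a_{11}, a_{12}, a_{21}, a_{22}, c$.

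Looking at $F_2(x)$ in \eqref{coeffD}, the coefficient of $x^2$ is the matrix $\left(\begin{smallmatrix} a_{11} & a_{12} \\ a_{21} & a_{22}\end{smallmatrix}\right)$; requiring this to vanish immediately gives $a_{11} = a_{12} = a_{21} = a_{22} = 0$. (One does not even need the lower-order terms of $F_2$.) Substituting these values back into the formulas for $F_1(x)$ and $F_0$ in \eqref{coeffD}, one finds $F_1(x) = 0$ and $F_0 = \left(\begin{smallmatrix} c & 0 \\ 0 & c\end{smallmatrix}\right) = c\,I$. Hence the only operators in $\mathcal D_2$ with vanishing second-order coefficient are the scalar multiples $c\,I$ of the identity, which have order zero (or are zero), not order one. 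Therefore $\mathcal D(W)$ contains no operator whose highest-order term is of order exactly one.

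I expect this argument to be essentially immediate given Theorem \ref{order2op}; there is no real obstacle, only the bookkeeping of setting the $x^2$-coefficient of $F_2$ to zero and propagating the consequences through \eqref{coeffD}. The one point to state carefully is that an order-one operator in $\mathcal D(W)$ would in particular be an element of $\mathcal D_2$, so that Theorem \ref{order2op} applies and its parametrization is exhaustive; this is exactly what was proved there. Thus the corollary follows, and in fact the computation shows more precisely that the order-$\le 1$ part of $\mathcal D(W)$ coincides with the scalar operators $\CC\, I$.

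\begin{proof}
Any differential operator of order one in $\mathcal D(W)$ is in particular an element of $\mathcal D_2$, so by Theorem \ref{order2op} it has coefficients of the form \eqref{coeffD} for some $a_{11}, a_{12}, a_{21}, a_{22}, c \in \CC$, and its second-order coefficient $F_2$ must vanish identically. Looking at the coefficient of $x^2$ in $F_2(x)$ in \eqref{coeffD}, the condition $F_2 = 0$ forces $a_{11} = a_{12} = a_{21} = a_{22} = 0$. Substituting these values into \eqref{coeffD} gives $F_1(x) = 0$ and $F_0 = c\, I$, so the operator equals $c\, I$, which has order zero. Hence there is no operator of order one in $\mathcal D(W)$.
\end{proof}
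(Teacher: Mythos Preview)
Your proof is correct and is precisely the argument the paper intends: the corollary is stated immediately after Theorem~\ref{order2op} without a separate proof, and the deduction you give---set $F_2=0$ in \eqref{coeffD}, read off $a_{11}=a_{12}=a_{21}=a_{22}=0$, and conclude $D=cI$---is exactly how one extracts it from the parametrization.
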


The elements of the sequence $\{Q_w\}_w$ are eigenfunctions of the operators  $D\in \mathcal D(W)$ and they satisfy
$Q_wD=\Lambda_w(D)Q_w$ ,for  $ {w\in\NN_0}$.
We explicitly state  the eigenvalues $\Lambda_w$ using formula \eqref{eigenvaluemonicos}: for a differential operator
$D= \partial^2 F_2+\partial F_1+F_0$ we have
$$\Lambda_w(D)=w(w-1)F_2^2+wF_1^1+F_0^0,$$
with $F_i^i$ (i=1,2,3) the leading coefficient of the polynomial coefficient $F_i$ of the differential operator $D$.
Therefore we get

\begin{cor} Let $D\in \mathcal D(W)$, defined as in Theorem \ref{order2op}.
 The  monic orthogonal polynomials $\{Q_w\}_w$ satisfy $$Q_w D=\Lambda_w(D) Q_w, \qquad \text{ for } {w\in\NN_0},$$
where the eigenvalue $\Lambda_w(D)$ is given by
$$\Lambda_w(D)=\begin{pmatrix}
  (w+p)(w+n-p+1)a_{11}+c& (w+n-p)(w+n-p+1)a_{12}\\(w+p)(w+p+1)a_{21}&(w+n-p)(w+p+1)a_{22}+c
\end{pmatrix}.$$
\end{cor}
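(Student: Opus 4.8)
The plan is to invoke directly the formula \eqref{eigenvaluemonicos} for the eigenvalue of an operator in $\mathcal D(W)$ acting on the monic sequence $\{Q_w\}_w$, which for an operator of order two is the one already recorded just before the statement, namely $\Lambda_w(D)=w(w-1)F_2^2+wF_1^1+F_0^0$, where $F_2^2,F_1^1,F_0^0$ denote the leading coefficients of the matrix polynomial coefficients $F_2,F_1,F_0$ of $D$. Since $D$ is assumed to be the operator of Theorem \ref{order2op}, these leading coefficients are read off directly from \eqref{coeffD}.

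The first step is therefore purely bookkeeping: from \eqref{coeffD} one has
$$F_2^2=\begin{pmatrix} a_{11}& a_{12}\\ a_{21}&a_{22}\end{pmatrix},\qquad F_1^1=\begin{pmatrix} (n+2)a_{11}& 2(n-p+1)a_{12}\\ 2(p+1)a_{21}& (n+2)a_{22}\end{pmatrix},$$
$$F_0^0=\begin{pmatrix} p(n-p+1)a_{11}+c& (n-p)(n-p+1)a_{12}\\ p(p+1)a_{21}& (p+1)(n-p)a_{22}+c\end{pmatrix},$$
and then $\Lambda_w(D)=w(w-1)F_2^2+wF_1^1+F_0^0$ is formed entry by entry.

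The second step is to check that each of the four resulting entries coincides with the claimed expression; this amounts to an elementary polynomial identity in $w$ with $p,n$ as parameters. Concretely, the $(1,1)$ entry requires $w(w-1)+w(n+2)+p(n-p+1)=(w+p)(w+n-p+1)$; the $(1,2)$ entry requires $w(w-1)+2w(n-p+1)+(n-p)(n-p+1)=(w+n-p)(w+n-p+1)$; the $(2,1)$ entry requires $w(w-1)+2w(p+1)+p(p+1)=(w+p)(w+p+1)$; and the $(2,2)$ entry requires $w(w-1)+w(n+2)+(p+1)(n-p)=(w+n-p)(w+p+1)$. Each of these is verified by expanding both sides. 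The scalar $c$ contributes only to the diagonal and carries through unchanged.

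I do not expect any genuine obstacle here: once \eqref{eigenvaluemonicos} is available and Theorem \ref{order2op} has pinned down the coefficients of $D$, the corollary is a short computation. The only point that needs a little care is respecting the $p\leftrightarrow n-p$ asymmetry that distinguishes the two off-diagonal entries and the two diagonal ones, so that the factors are attached to the correct positions.
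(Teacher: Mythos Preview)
Your proposal is correct and follows exactly the approach the paper takes: the paper simply records the formula $\Lambda_w(D)=w(w-1)F_2^2+wF_1^1+F_0^0$ from \eqref{eigenvaluemonicos}, reads off the leading coefficients $F_2^2,F_1^1,F_0^0$ from \eqref{coeffD}, and states the corollary with the words ``Therefore we get''. Your entry-by-entry verification of the four quadratic identities is the only thing one might add to make the computation fully explicit, and it is accurate.
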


\smallskip
Now we introduce a useful basis for the differential operators of order at most two in the algebra $\mathcal D(W)$: the identity $I$ and
\begin{align*}
  D_1&= \partial ^2 \begin{pmatrix}  x^2& x\\-x&-1 \end{pmatrix}
     + \partial \begin{pmatrix} (n+2)x&n-p+2 \\-p &0\end{pmatrix}
     +\begin{pmatrix} p\,(n-p+1)&0\\0& 0  \end{pmatrix},
     \displaybreak[0]\\
  D_2&=\partial^2\begin{pmatrix}  -1& -x\\x&x^2 \end{pmatrix}
     + \partial \begin{pmatrix} 0&p-n\\p+2 &(n+2)x\end{pmatrix}
    +\begin{pmatrix}  0&0\\0& (p+1)(n-p) \end{pmatrix}  ,
    \displaybreak[0]\\
  D_3&=\partial^2 \begin{pmatrix}  -x& -1\\ x^2&x \end{pmatrix}
     +\partial \begin{pmatrix} -p& 0\\2(p+1)x &p+2\end{pmatrix}
    +\begin{pmatrix}  0&0\\p(p+1) & 0 \end{pmatrix} ,
    \displaybreak[0]\\
  D_4&=\partial^2\begin{pmatrix}  x&x^2 \\-1&-x \end{pmatrix}
     + \partial \begin{pmatrix} n-p+2 & 2(n-p+1) x\\0 &p-n\end{pmatrix}
    +\begin{pmatrix}  0&  (n-p)(n-p+1)\\0& 0 \end{pmatrix}.
\end{align*}

\smallskip
\noindent The corresponding eigenvalues are
\begin{align*}
  \Lambda_w(D_1)&=
  \left(\begin{smallmatrix}
  (w+p)(w+n-p+1) & 0\\ 0& 0
\end{smallmatrix}\right), \,
&&\Lambda_w(D_2) = \left(\begin{smallmatrix}
 0 & 0\\ 0& (w+p+1)(w+n-p)
\end{smallmatrix}\right) ,\\
\Lambda_w(D_3)& = \left(\begin{smallmatrix}
 0 & 0\\ (w+p)(w+p+1) & 0
\end{smallmatrix}\right), \,
&&\Lambda_w(D_4) = \left(\begin{smallmatrix}
 0 & (w+n-p)(w+n-p+1)\\ 0& 0
\end{smallmatrix}\right).
\end{align*}

\begin{remark}
  The differential operator $D$ appearing  in Theorem \ref{operatorD} is
  $D=-D_1-D_2+p(n-p)I.$
\end{remark}

We observe here that, for example,
$$ \Lambda_w(D_1) \Lambda_w(D_3) \neq \Lambda_w(D_3) \Lambda_w(D_1), \quad \text{ for all } {w\in\NN_0}.$$
By using Proposition \ref{prop2.8-GT} we obtain that $D_1D_3\neq D_3D_1$, which in turn implies the following result.

\begin{cor}
  The algebra $\mathcal D(W)$ is not commutative.
\end{cor}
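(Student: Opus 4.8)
The goal is to show that $\mathcal D(W)$ is not commutative, and the cleanest route is to exhibit two specific elements that do not commute, using the injectivity of the representation sequence $\{\Gamma_w\}_{w\ge 0}$ (Proposition \ref{prop2.8-GT}, as reformulated in \eqref{separapuntos}). The natural candidates are $D_1$ and $D_3$ from the basis just constructed: first I would record that both lie in $\mathcal D(W)$, which is immediate since each is one of the basis operators of $\mathcal D_2$ obtained in Theorem \ref{order2op}. Then the whole question is reduced to a finite computation with their eigenvalues.

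Next I would compute, for a fixed $w\ge 0$, the two matrix products $\Lambda_w(D_1)\Lambda_w(D_3)$ and $\Lambda_w(D_3)\Lambda_w(D_1)$ using the explicit $2\times 2$ forms listed above. With $\Lambda_w(D_1)=\left(\begin{smallmatrix}(w+p)(w+n-p+1)&0\\0&0\end{smallmatrix}\right)$ and $\Lambda_w(D_3)=\left(\begin{smallmatrix}0&0\\(w+p)(w+p+1)&0\end{smallmatrix}\right)$, one gets $\Lambda_w(D_1)\Lambda_w(D_3)=0$ (since $D_1$ is upper-triangular and kills the column vector produced by $D_3$), whereas $\Lambda_w(D_3)\Lambda_w(D_1)$ has the nonzero $(2,1)$-entry $(w+p)^2(w+p+1)(w+n-p+1)$, which is strictly positive for every $w\ge 0$ because $0<p<n$ forces $w+p>0$, $w+p+1>0$ and $w+n-p+1>0$. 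Hence $\Lambda_w(D_1)\Lambda_w(D_3)\ne\Lambda_w(D_3)\Lambda_w(D_1)$ for all $w$, exactly the inequality already displayed in the text.

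To finish, I would invoke the fact (Proposition \ref{prop2.8-GT}) that $D\mapsto(\Gamma_0(D),\Gamma_1(D),\dots)$ is an \emph{algebra} homomorphism, so $\Gamma_w(D_1D_3)=\Gamma_w(D_1)\Gamma_w(D_3)=\Lambda_w(D_1)\Lambda_w(D_3)$ and likewise $\Gamma_w(D_3D_1)=\Lambda_w(D_3)\Lambda_w(D_1)$; since these disagree for (say) $w=0$, the criterion \eqref{separapuntos} gives $D_1D_3\ne D_3D_1$ in $\mathcal D(W)$, so the algebra is noncommutative.

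There is essentially no serious obstacle here: the only point requiring a little care is making sure the homomorphism property and the separation-of-points property are both being used correctly — noncommutativity of the eigenvalue \emph{matrices} alone is not quite enough without the remark that $\Gamma_w$ is multiplicative and that $D_1,D_3$ genuinely belong to $\mathcal D(W)$ rather than merely to $\mathcal D$. Both of these are supplied by the cited results of \cite{GT07}, so the argument is short. (One should also note the positivity check $(w+p)^2(w+p+1)(w+n-p+1)>0$ uses the standing hypothesis $0<p<n$; it is what guarantees the relevant entry never accidentally vanishes.)
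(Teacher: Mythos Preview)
Your proposal is correct and follows essentially the same approach as the paper: observe that $\Lambda_w(D_1)\Lambda_w(D_3)\neq\Lambda_w(D_3)\Lambda_w(D_1)$ for all $w\ge 0$, and then invoke Proposition~\ref{prop2.8-GT} (the injective algebra homomorphism $D\mapsto(\Gamma_w(D))_{w\ge 0}$) to conclude $D_1D_3\neq D_3D_1$. Your write-up is in fact more careful than the paper's, making explicit the matrix computation, the multiplicativity of $\Gamma_w$, and the positivity of $(w+p)^2(w+p+1)(w+n-p+1)$ under the standing hypothesis $0<p<n$.
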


By following the same argument, through the sequence of eigenvalues,  we obtain the following relations among the differential operators $D_1, D_2,D_3,D_4$.
\begin{align*}
D_1D_2& =0,   \quad D_2D_1=0, \quad    D_1D_3=0,     \quad D_4D_1=0,  \quad
D_2D_4=0,   \quad D_3D_2=0,   \quad  D_3^2=0,   \quad   D_4^2 =0, \\
D_3D_1&= D_2D_3-(n-2p)D_3, \quad D_1D_4= D_4D_2-(n-2p)D_4,  \quad
D_3D_4= D_2^2-(n-2p)D_2,\\ D_4D_3&=D_1^2+(n-2p)D_1.
\end{align*}

\begin{conj} \mbox{}
\begin{enumerate}
\item    There are no operators of odd order in $\mathcal D(W)$.
\item  The second order differential operators in $\mathcal D(W)$ generate  the algebra $\mathcal D(W)$.
\end{enumerate}
\end{conj}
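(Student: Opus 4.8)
The plan is to translate the problem into the language of eigenvalue sequences, pin down the subalgebra generated by the four explicit second order operators, and then argue that nothing outside it occurs. Throughout put $\alpha(w)=(w+p)(w+n-p+1)$, $\beta(w)=(w+p+1)(w+n-p)$, $\gamma(w)=(w+p)(w+p+1)$, $\epsilon(w)=(w+n-p)(w+n-p+1)$, and note the identities $\beta=\alpha+(n-2p)$ and $\gamma\epsilon=\alpha\beta$; let $E_{ij}$ be the matrix units in $\operatorname{Mat}_2(\CC)$. By Proposition \ref{prop2.8-GT} the assignment $D\mapsto(\Gamma_w(D))_{w\ge0}$ is an injective algebra homomorphism of $\mathcal D(W)$ into $\operatorname{Mat}_2(\CC)^{\NN_0}$, and by Proposition \ref{eigenvalue-prop} each $\Gamma_w(D)$ is the value at $w$ of a matrix polynomial in $w$; the explicit eigenvalues of $D_1,\dots,D_4$ are $\Lambda_w(D_1)=\alpha(w)E_{11}$, $\Lambda_w(D_2)=\beta(w)E_{22}$, $\Lambda_w(D_3)=\gamma(w)E_{21}$, $\Lambda_w(D_4)=\epsilon(w)E_{12}$.

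\textbf{The generated subalgebra.} Let $\mathcal G\subseteq\mathcal D(W)$ be the subalgebra generated by $I,D_1,D_2,D_3,D_4$. Using the relations among the $D_j$ listed after their definition (in particular $D_1D_2=D_2D_1=D_1D_3=D_4D_1=D_2D_4=D_3D_2=D_3^2=D_4^2=0$, together with $D_1D_4=D_4D_2-(n-2p)D_4$, $D_3D_1=D_2D_3-(n-2p)D_3$, $D_3D_4=D_2^2-(n-2p)D_2$ and $D_4D_3=D_1^2+(n-2p)D_1$), one checks by a short induction on exponents that the $\CC$-span of $I$, $\{D_1^k\}_{k\ge1}$, $\{D_2^k\}_{k\ge1}$, $\{D_3D_1^k\}_{k\ge0}$ and $\{D_4D_2^k\}_{k\ge0}$ is stable under right multiplication by each $D_j$ and hence, since it contains $I$, equals $\mathcal G$. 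Their eigenvalue sequences $I,\ \alpha^kE_{11},\ \beta^kE_{22},\ \gamma\alpha^kE_{21},\ \epsilon\beta^kE_{12}$ are $\CC$-linearly independent polynomials in $w$, so these operators form a basis of $\mathcal G$ and the image of $\mathcal G$ under the eigenvalue map is exactly $\mathcal B:=\left(\begin{smallmatrix}\CC[\alpha]&\epsilon\,\CC[\alpha]\\\gamma\,\CC[\alpha]&\CC[\alpha]\end{smallmatrix}\right)$ (using $\beta=\alpha+(n-2p)$, $\gamma\epsilon=\alpha\beta$). Moreover the leading coefficients of $D_1,D_2,D_3,D_4$ are precisely $E_{11},E_{22},E_{21},E_{12}$, and leading coefficients multiply under composition when nonzero, so $D_1^k,D_2^k,D_3D_1^k,D_4D_2^k$ have orders $2k,2k,2k+2,2k+2$ respectively (all even), with leading coefficients $E_{11},E_{22},E_{21},E_{12}$; at each fixed even order these four leading coefficients are linearly independent, so no leading terms can cancel in a linear combination and every element of $\mathcal G$ has even order. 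Consequently the whole conjecture reduces to the single equality $\mathcal D(W)=\mathcal G$: part (2) is then immediate, and part (1) follows from the parity statement just established.

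\textbf{The reverse inclusion $\mathcal D(W)\subseteq\mathcal G$ — the crux.} I would prove this by induction on the order $m$ of $D\in\mathcal D(W)$, the cases $m\le5$ being the computations already reported. Since $\mathcal D(W)=\mathcal S(W)\oplus i\,\mathcal S(W)$ and $\mathcal G$ is $*$-stable — one has $D_1^*=D_1$, $D_2^*=D_2$ and, from \eqref{opsymm2}, $D_3^*=\tfrac{p}{n-p}D_4$, $D_4^*=\tfrac{n-p}{p}D_3$, while $D_1^kD_4\in\operatorname{span}\{D_4D_2^i\}$ — it suffices to treat symmetric $D$. Writing $\Lambda_w(D)=\left(\begin{smallmatrix}a(w)&b(w)\\c(w)&d(w)\end{smallmatrix}\right)$, the symmetry relation $\Lambda_w(D)\langle Q_w,Q_w\rangle=\langle Q_w,Q_w\rangle\Lambda_w(D)^*$ together with the explicit diagonal value of $\langle Q_w,Q_w\rangle$ (the Remark at the end of Section \ref{sec-ttrr}) yields $(n-p)\gamma(w)\,b(w)=p\,\epsilon(w)\,\overline{c(w)}$, so (when $n-2p\neq\pm1$; the two exceptional values require a small variant) $\epsilon\mid b$, $\gamma\mid c$ and $(n-p)(b/\epsilon)=p\,\overline{(c/\gamma)}$; and the involution $D\mapsto\hat D$ of $\mathcal D(W)$ induced by $x\mapsto-x$ and conjugation by $J=\left(\begin{smallmatrix}1&0\\0&-1\end{smallmatrix}\right)$ (using $W(-x)=JW(x)J$ and $Q_w(-x)=(-1)^wJQ_w(x)J$, whence $\Lambda_w(\hat D)=J\Lambda_w(D)J$) shows that $\mathcal B$ is cut out compatibly by these constraints. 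What remains, and what I expect to be the main obstacle, is to force the \emph{diagonal} entries $a,d$ into $\CC[\alpha]$, i.e. to show $a(w)=a(-(n+1)-w)$ and likewise for $d$: this is precisely the information not supplied by symmetry of $D$, and it must be extracted from the condition $D\in\mathcal D$, i.e. from $\Lambda_w(D)=\sum_i[w]_iF^i_i$ for genuine polynomial coefficients $F_i$ of degree $\le i$ making $Q_w$ an eigenfunction. Concretely I would solve the top ($\partial^m$-)equation $F_m(x)W(x)=W(x)F_m(x)^*$ of the order-$m$ form of Proposition \ref{equivDsymm} for the leading coefficient together with its boundary condition $\lim_{x\to\pm1}F_m(x)W(x)=0$ — where the rank-one degeneracy of $\left(\begin{smallmatrix}px^2+n-p&-nx\\-nx&(n-p)x^2+p\end{smallmatrix}\right)$ at $x=\pm1$ is essential — then propagate down the remaining equations to read off the admissible $\Lambda_m$, subtract an order-$m$ element of $\mathcal G$ with that leading behaviour, and close the induction; in particular for $m$ odd one must show no admissible leading term survives. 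Along the diagonal this top-symbol analysis degenerates to the scalar Gegenbauer weight, where the classical Bochner-type rigidity (no odd order operators; the algebra is a polynomial ring in the single second order operator) provides the needed vanishing, and transporting this through the off-diagonal coupling controlled by $(n-p)\gamma b=p\epsilon\bar c$ is where the real work lies.
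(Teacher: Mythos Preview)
The statement you are attempting to prove is a \emph{conjecture} in the paper, not a theorem: the authors do not prove it. Their only evidence is the sentence immediately preceding it, where they report that symbolic computation shows there are no operators of order three or five and that the order-four operators (modulo lower order) form a four-dimensional space generated by $D_1,\dots,D_4$. So there is no ``paper's own proof'' to compare against.

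Your proposal goes well beyond the paper. The first half --- identifying the eigenvalue image of the subalgebra $\mathcal G$ generated by $I,D_1,\dots,D_4$ as
\[
\mathcal B=\begin{pmatrix}\CC[\alpha]&\epsilon\,\CC[\alpha]\\\gamma\,\CC[\alpha]&\CC[\alpha]\end{pmatrix},
\]
exhibiting the basis $\{I,D_1^k,D_2^k,D_3D_1^k,D_4D_2^k\}$, and the leading-coefficient argument that every element of $\mathcal G$ has even order --- is correct and is a genuine contribution not present in the paper.

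The second half, however, is not a proof but a programme, and you flag this yourself. The reduction to symmetric $D$ and the constraint $(n-p)\gamma b=p\,\epsilon\,\bar c$ on the off-diagonal eigenvalue entries are fine, and the divisibility $\epsilon\mid b$, $\gamma\mid c$ follows generically. But two essential steps are missing: you never show that the quotients $b/\epsilon$, $c/\gamma$ lie in $\CC[\alpha]$ (the $J$-involution only changes their signs and gives no such restriction), and, as you say explicitly, you do not show that the diagonal entries $a,d$ lie in $\CC[\alpha]$. Your proposed route through the top-symbol symmetry equation and ``classical Bochner-type rigidity'' for the scalar Gegenbauer weight is only a heuristic: the scalar Bochner theorem constrains a single scalar operator, not the diagonal of a $2\times2$ system coupled through $F_mW=WF_m^*$, and the rank-one degeneracy of $W$ at $x=\pm1$ does not by itself force the polynomial-in-$\alpha$ structure. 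In short, you have reduced both parts of the conjecture to the single inclusion $\mathcal D(W)\subseteq\mathcal G$, which is a clean reformulation, but that inclusion remains open --- as it does in the paper.
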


\smallskip
 For a differential operator of order two $D=\partial ^2 F_2+\partial F_1 +F_0 \in \mathcal D(W)$, the explicit expression of  the adjoint operator $D^*$ is
\begin{equation*}
  D^*=\partial ^2 G_2+\partial G_1 +G_0,
\end{equation*}
where the polynomials $G_i$, $i=0,1,2$, are defined by
\begin{align*}
  G_0&= \langle Q_0, Q_0\rangle \Lambda_0(D)^* \langle Q_0, Q_0\rangle^{-1},\qquad
  G_1= \langle Q_1, Q_1\rangle \Lambda_1(D)^* \langle Q_1, Q_1\rangle^{-1} Q_1(x)-Q_1(x) G_0,\\
  G_2&= \langle Q_2, Q_2\rangle \Lambda_2(D)^* \langle Q_2, Q_2\rangle^{-1} Q_2(x)-\partial(Q_2)G_1(x)-Q_2(x) G_0,
\end{align*}
see Theorem 4.3 in \cite{GT07}.

\smallskip

Also from Corollary 4.5 in \cite{GT07}, we obtain the expression for the corresponding eigenvalues for the adjoint operator $D^*$, in terms of the eigenvalues of the differential operator $D$ and the norm of the polynomials $Q_w$,
$$\Lambda_w(D^*)= \langle Q_w, Q_w\rangle \Lambda_w(D)^* \langle Q_w, Q_w\rangle^{-1},\quad \text{for all }w.$$

By using the expressions of $\langle Q_i, Q_i\rangle$, given at the end of Section \ref{sec-ttrr}, and making straightforward computations, we can verify that
$$D_1^*=D_1,\quad D_2^*=D_2,\quad \text{and}\quad D_3^*=\tfrac{p}{n-p} D_4.$$
Therefore
$$ E_3=(n-p)D_3+pD_4\qquad \text{and} \qquad E_4=i\big((n-p)D_3-pD_4\big)$$
are also symmetric operators, because for any $D\in \mathcal D(W)$ the operators $D+D^*$ and $i(D-D^*)$ are symmetric operators.
 Explicitly,
\begin{align*}
  E_3=(n-p)D_3+pD_4 &
  =\partial^2 \begin{pmatrix}  -x(n-2p) & x^2 p-n+p   \\  x^2(n-p)-p&x(n-2p) \end{pmatrix}
     +\partial \begin{pmatrix} 2p& 2p(n-p+1)x\\ 2(p+1)(n-p) x&2(n-p) \end{pmatrix} \\ &\quad
    +\begin{pmatrix}  0&p(n-p)(n-p+1)\\p(p+1)(n-p) & 0 \end{pmatrix}, \displaybreak[0]
  \end{align*}
  \begin{align*}
 -i E_4 =(n-p)D_3-pD_4 &
  =\partial^2 \begin{pmatrix}  -n x & -x^2 p-n+p   \\  x^2(n-p)+p& n x \end{pmatrix}
     +\partial \begin{pmatrix} -2p(n-p+1)& -2p(n-p+1)x\\ 2(p+1)(n-p) x&2(n-p)(p+1) \end{pmatrix} \\ &\quad
    +\begin{pmatrix}  0&-p(n-p)(n-p+1)\\p(p+1)(n-p) & 0 \end{pmatrix}.
\end{align*}

\noindent The corresponding eigenvalues are
\begin{align*}
\Lambda_w\big (E_3\big)=&
\begin{pmatrix}
0& p(n-p+w)   (n-p+w+1) \\ (n-p)(p+w)(p+w+1) &0
\end{pmatrix},
\\
\Lambda_w\big (-iE_4\big)=&
\begin{pmatrix}
0& \!\!-p(n-p+w)   (n-p+w+1) \\ (n-p)(p+w)(p+w+1) &0
\end{pmatrix}.
\end{align*}

\begin{remark}
In \cite{KPR12} the authors study  matrix valued  orthogonal polynomials related to spherical functions on the group $(\mathrm{SU}(2)\times \mathrm{SU}(2), \mathrm{SU}(2))$.
The weight matrix is $W_+^{(\nu)}$, with $\nu=1$ in the notation of Remark \ref{comparacionK}.
Let us denote $\widetilde D_1$, $\widetilde D_2$ and $\widetilde D_3$ the differential operators $D_1$,$D_2$ and $D_3$ appearing in Theorem 8.1 in
\cite{KPR12}.
Then we have the following relations with our operators $D_1$, $D_2$, $D_3$ and $D_4$ for the case $n=3$ and $p=1$
$$\widetilde D_1 =L(D_1+D_2-3)L^{-1}, \quad \widetilde D_2 =L D_2 L^{-1}, \quad \widetilde D_3 =-\sqrt 2 \,L (2D_3+D_4)L^{-1}.$$
\end{remark}

\medskip
\noindent {\bf Acknowledgements.}  We would like to thank the referees for many useful comments
and suggestions that helped us to improve a first version of this paper.

\end{document}